\newcommand\bfx{{\mathbf x}}
\newcommand\bfA{{\mathbf A}}
\newcommand\bfK{{\mathbf K}}
\newcommand\bfM{{\mathbf M}}
\newcommand\andquad{\quad\hbox{ and }\quad}
\renewcommand\d{\hbox{\rm d}}
\newcommand{\diff}{\frac{\textrm{d}}{\textrm{d}t}}
\newcommand{\Ga}{\Gamma}
\newcommand{\Gat}{\Gamma(t)}
\newcommand{\GT}{\mathcal{G}_T}
\newcommand{\laplace}{\Delta}
\newcommand{\nbg}{\nabla_{\Gamma}}
\newcommand{\nbgh}{\nabla_{\Gamma_h}}
\newcommand{\mat}{\partial^{\bullet}}
\newcommand{\Pt}{\widetilde{\mathcal{P}}_h}
\newcommand{\co}{continuous}
\renewcommand{\d}{\textrm{d}}
\newcommand{\inv}{^{-1}}
\newcommand{\N}{\mathbb{N}}
\newcommand{\nb}{\nabla}
\newcommand{\pa}{\partial}
\newcommand{\R}{\mathbb{R}}
\newcommand{\spn}{\textnormal{span}}
\newcommand{\st}{such that}
\def \t {(t)}
\newcommand{\Th}{\mathcal{T}_h}
\def \to {\rightarrow}
\newcommand{\vphi}{\varphi}
\newcommand\enodes\xs
\newcommand\nnodes\bfx
\newcommand\rnodes\xs
\newcommand\regmass\bfK
\newcommand\mass\bfM
\newcommand\stiff\bfA
\newcommand\abs[1]{\lvert #1\rvert}
\newcommand{\Btensor}{\mathcal{B}}
\newcommand{\xs}{\bfx^\ast}
\newcommand{\aast}{a^{\ast}}
\def \P {\mathcal{P}_h}
\renewcommand{\nu}{\mathrm{n}} 
\begin{document}

\title{High-order evolving surface finite element method \\ for parabolic problems on evolving surfaces}
\shorttitle{High-order ESFEM for evolving surface PDEs}

\author{%
{\sc Bal\'{a}zs Kov\'{a}cs\thanks{Corresponding author. Email: kovacs@na.uni-tuebingen.de}} \\
Mathematisches Institut\\ University of T\"{u}bingen\\ Auf der Morgenstelle 10\\ 72076 T\"{u}bingen, Germany
}

\shortauthorlist{B.~Kov\'{a}cs}

\maketitle

\begin{abstract}
  {High-order spatial discretisations and full discretisations of parabolic partial differential equations on evolving surfaces are studied. We prove convergence of the high-order evolving surface finite element method, by showing high-order versions of geometric approximation errors and perturbation error estimates and by the careful error analysis of a modified Ritz map. Furthermore, convergence of full discretisations using backward difference formulae and implicit Runge--Kutta methods are also shown.}
  {parabolic problems, evolving surfaces, high-order ESFEM, Ritz map, convergence, BDF and Runge--Kutta methods}

  \noindent MSC: 35R01, 65M60, 65M15, 65M12
\end{abstract}

%

\section{Introduction}


Numerical methods for partial differential equations (PDEs) on stationary and evolving surfaces and for coupled bulk--surface PDEs are under intensive research in the recent years. Surface finite element methods are all based on the fundamental paper of \cite{Dziuk88}, further developed for evolving surface parabolic problems by \cite{DziukElliott_ESFEM,DziukElliott_L2}.

High-order versions of various finite element methods for problems on a \emph{stationary surface} have received attention in a number of publications previously. We give a brief overview on this literature here:
\begin{itemize}
  \item The surface finite element method of \cite{Dziuk88} was extended to higher order finite elements on stationary surfaces by \cite{Demlow2009}. Some further important results for higher order surface finite elements were shown by \cite{ElliottRanner}.
  \item Discontinuous Galerkin method for elliptic surface problems were analysed by \cite{DednerMadhavanStinner}, and then extended to high-order discontinuous Galerkin methods  in \cite{highorderDG}.
  \item Recently, the \emph{unfitted} (also called trace or cut) finite element methods are investigated intensively, see for instance \cite{OlshanskiiReuskenGrande}, \cite{Reusken} and \cite{cutFEM}. A higher order version of the trace finite element method was analysed by \cite{GrandeReusken}.
\end{itemize}
However, to our knowledge, there are no papers showing convergence of the \emph{high-order} evolving surface finite element method for parabolic partial differential equations on \emph{evolving surfaces}.

\smallskip
In this paper we extend the $H^1$ and $L^2$ norm convergence results of \cite{DziukElliott_ESFEM} and \cite{DziukElliott_L2} to \emph{high-order} evolving surface finite elements applied to parabolic problems on evolving surfaces with prescribed velocity. Furthermore, convergence results for full discretisations using Runge--Kutta methods, based on \cite{DziukLubichMansour_rksurf}, and using backward difference formulae (BDF), based on \cite{LubichMansourVenkataraman_bdsurf}, are also shown.

To prove high-order convergence of the spatial discretisation three main groups of errors have to be analysed: \begin{itemize}
  \item \emph{Geometric errors}, resulting from the appropriate approximation of the smooth surface. Many of these results carry over from \cite{Demlow2009} by careful investigation of time dependencies, while others are extended from \cite{diss_Mansour} and \cite{DziukLubichMansour_rksurf}, \cite{LubichMansourVenkataraman_bdsurf}.
  \item \emph{Perturbation errors of the bilinear forms}, whose higher order version can be shown by carefully using the core ideas of \cite{DziukElliott_L2}.
  \item High-order estimates for the \emph{errors of a modified Ritz projection}, which was defined in \cite{LubichMansour_wave}. These projection error bounds rely on the nontrivial combination of the mentioned geometric error bounds and on the well known Aubin--Nitsche trick.
\end{itemize}

We further show convergence results for full discretisations using Runge--Kutta and BDF methods. The error estimates for Runge--Kutta methods shown in \cite{DziukLubichMansour_rksurf} and for BDF methods in \cite{LubichMansourVenkataraman_bdsurf} are applicable without any modifications, since, the semidiscrete problem can be written in a matrix-vector formulation, cf.\ \cite{DziukLubichMansour_rksurf}, where the matrices have exactly the same properties as in the linear finite element case. Therefore, the fully discrete convergence results transfer to high-order evolving surface finite elements using the mentioned error estimates of the Ritz map.

The implementation of the high-order method is also a nontrivial task. The matrix assembly of the time dependent mass and stiffness matrices is based on the usual reference element technique. Similarly to isoparametric finite elements, the approximating surface is parametrised over the reference element, hence, all the computations are done there.

It was pointed out by \cite{GrandeReusken}, that the approach of \cite{Demlow2009} requires \textit{explicit knowledge of the exact signed distance function to the surface $\Ga$}. However, the signed distance function is only used in the analysis, but it is not required for the computations away from the initial time level. It is only used for generating the initial surface approximation.

Here we only consider linear parabolic PDEs on evolving surfaces, however we believe our techniques and results carry over to other cases, such as to the Cahn--Hilliard equation \cite{ElliottRanner_CH}, to wave equations \cite{LubichMansour_wave}, to ALE methods \cite{ElliottVenkataraman_ALE}, \cite{KovacsPower_ALEdiscrete}, nonlinear problems \cite{KovacsPower_quasilin} and to evolving versions of bulk--surface problems \cite{ElliottRanner}. Furthermore, while in this paper we only consider evolving surfaces with prescribed velocity, many of the high-order geometric estimates of this paper are essential for the numerical analysis of parabolic problems where the \emph{surface velocity depends on the solution}, cf.~\cite{soldriven}.

The paper is organised in the following way:
In Section~\ref{section: the problem} we recall the basics of linear parabolic problems on evolving surfaces together with some notations based on \cite{DziukElliott_ESFEM}.
Section~\ref{section: ESFEM} deals with the description of higher order evolving surface finite elements based on \cite{Demlow2009}.
Section~\ref{section: main result} contains the time discretisations and states the main results of this paper: semidiscrete and fully discrete convergence estimates.
In Section~\ref{section: geometric estimates} we turn to the estimates of geometric errors and geometric perturbation estimates.
In Section~\ref{section: Ritz} the errors in the generalised Ritz map and in its material derivatives are estimated.
Section~\ref{section: error bounds} contains the proof of the main results.
Section~\ref{section: implementation} shortly describes the implementation of the high-order evolving surface finite elements. 
In Section~\ref{section: numerics} we present some numerical experiments illustrating our theoretical results.

\section{The problem}
\label{section: the problem}

Let us consider a smooth evolving compact surface $\Ga\t \subset \R^{m+1}$ ($m\leq 2$), $0 \leq t \leq T$, which moves with a given smooth velocity $v$. Let $\mat u = \pa_{t} u + v \cdot \nb u$ denote the material derivative of the function $u$, where $\nbg$ is the tangential gradient given by $\nbg u = \nb u -\nb u \cdot \nu \nu$, with unit normal $\nu$. We are sharing the setting of \cite{DziukElliott_ESFEM}, \cite{DziukElliott_L2}.

We consider the following linear problem on the above surface, for $u=u(x,t)$,
\begin{equation}
\label{eq: strong form}
    \begin{cases}
    \begin{alignedat}{4}
        \mat u + u \nb_{\Gat} \cdot v - \laplace_{\Gat} u &= f & \qquad & \textrm{ on } \Ga\t ,\\
        u(\cdot,0) &= u_0 & \qquad & \textrm{ on } \Ga(0),
    \end{alignedat}
    \end{cases}
\end{equation}
where for simplicity we set $f=0$, but all our results hold with a non-vanishing inhomogeneity as well.

Let us briefly recall some important concepts used later on, whereas in general for basic formulas we refer to \cite{DziukElliott_acta}.
An important tool is the Green's formula on closed surfaces
\begin{equation*}
    \int_{\Gat} \nb_{\Gat} z \cdot \nb_{\Gat} \phi = -\int_{\Gat} (\laplace_{\Gat} z) \phi.
\end{equation*}
We use Sobolev spaces on surfaces: For a smooth surface
$\Gat$, for fixed $t\in[0,T]$, we define
$$
    H^{1}(\Gat) = \bigl\{ \eta \in L^{2}(\Gat) \mid \nb_{\Gat} \eta \in
    L^{2}(\Gat)^{m+1} \bigr\},
$$
and analogously $H^{k}(\Gat)$ for $k\in \N$, cf.\ Section~2.1 of \cite{DziukElliott_ESFEM}.
Finally, the space-time manifold is denoted by $\GT= \cup_{t\in[0,T]} \Ga\t\times\{t\}$.

The variational formulation of this problem reads as: Find $u\in H^1(\GT)$ such that
\begin{equation}
\label{eq: weak form}
    \diff \int_{\Gat} u \vphi + \int_{\Gat} \nb_{\Gat} u \cdot \nb_{\Gat} \vphi = \int_{\Gat} u\mat \vphi
\end{equation}
holds for almost ever $t\in(0,T)$ for every $\vphi(\cdot,t) \in H^1(\Gat)$, and $u(\cdot,0)=u_0$ holds. For suitable $u_{0}$ existence and uniqueness results for \eqref{eq: weak form} were obtained by Theorem~4.4 in \cite{DziukElliott_ESFEM}.

\section{High-order evolving surface finite elements}
\label{section: ESFEM}

We define the high-order evolving surface finite element method (ESFEM) applied to our problem following \cite{Demlow2009} and \cite{Dziuk88}, \cite{DziukElliott_ESFEM}. We use simplicial elements and continuous piecewise polynomial basis functions of degree $k$.

\subsection{Basic notions}

The smooth \emph{initial} surface $\Ga(0)$ is approximated by a $k$-order interpolating discrete surface denoted by $\Ga_h^k(0)$, with vertices $a_i(0)$, $i=1,2,\dotsc,N$, and it is given as
\begin{equation*}
    \Ga_h^k(0) = \bigcup_{E(0)\in \Th^k(0)} E(0).
\end{equation*}
More details and the properties of such a discrete initial surface can be found in Section~2 of \cite{Demlow2009}.

Then the nodes of the initial triangulation, sitting on the exact surface, are moved along the space-time manifold with the known velocity $v$. Hence, the nodes $(a_i(t))_{i=1}^N$ define a triangulation also for $0 < t \leq T$, by
\begin{equation*}
    \Ga_h^k(t) = \bigcup_{E(t)\in \Th^k(t)} E(t).
\end{equation*}
The discrete surface $\Ga_h^k(t)$ remains to be an interpolation of $\Gat$ for all times. We always assume that the evolving elements $E(t)$ are forming an admissible triangulation $\Th^k(t)$, with $h$ denoting the maximum diameter, cf.\ Section~5.1 of \cite{DziukElliott_ESFEM}. The discrete tangential gradient on the discrete surface $\Ga_h^k\t$ is given by
\begin{equation*}
    \nb_{\Ga_h^k\t} \phi := \nb {\phi} - \nb {\phi} \cdot \nu_h \nu_h,
\end{equation*}
understood in an elementwise sense, with $\nu_h$ denoting the normal to $\Ga_h^k(t)$, see \cite{Demlow2009}, \cite{DziukElliott_ESFEM}.

For every $t\in[0,T]$ and for the $k$-order interpolation $\Ga_h^k\t$ we define the finite element subspace of order $k$, denoted by $S_h^k\t$, spanned by the continuous evolving basis functions $\chi_j$ of piecewise degree $k$, satisfying $\chi_j(a_i\t,t) = \delta_{ij}$ for all $i,j = 1, 2, \dotsc, N$, therefore
\begin{equation*}
    S_h^k\t = \spn\big\{ \chi_1(  \cdot ,t), \chi_2(  \cdot ,t), \dotsc, \chi_N(  \cdot ,t) \big\}.
\end{equation*}

\noindent We interpolate the surface velocity on the discrete surface using the basis functions and denote it with $V_h$. Then the discrete material derivative is given by
\begin{equation*}
    \mat_h \phi_h = \pa_t \phi_h + V_h \cdot \nb \phi_h  \qquad (\phi_h \in S_h^k\t).
\end{equation*}
The key \textit{transport property} derived in Proposition 5.4 in \cite{DziukElliott_ESFEM}, is the following
\begin{equation}
\label{eq: transport property}
    \mat_h \chi_j = 0 \qquad \textrm{for} \quad j=1,2,\dotsc,N.
\end{equation}

The spatially discrete problem (for a fixed degree $k$) then reads as: Find $U_h\in S_h^k\t$ such that
\begin{equation}
\label{eq: semidiscrete problem}
        \diff \int_{\Ga_h^k\t} U_h \phi_h
        + \int_{\Ga_h^k\t} \nbgh U_h \cdot \nbgh \phi_h = \int_{\Ga_h^k\t}\!\! U_h \mat_h \phi_h,  \qquad (\forall \phi_h \in S_h^k\t),
\end{equation}
with the initial condition $U_h^0\in S_h^k(0)$ being a suitable approximation to $u_0$.

Later on we will always work with a high-order approximation surface $\Ga_h^k\t$ and with the corresponding high-order evolving surface finite element space $S_h^k\t$ (with $2\leq k\in\N$), therefore from now on we drop the upper index $k$, unless we would like to emphasize it, or it is not clear from the context.

\subsection{The ODE system}
Similarly to \cite{DziukLubichMansour_rksurf}, the ODE form of the above problem \eqref{eq: semidiscrete problem} can be derived by setting
\begin{equation*}
    U_h( \, \cdot \,,t) = \sum_{j=1}^N \alpha_j\t \chi_j( \, \cdot \,,t)
\end{equation*}
into the semidiscrete problem, and by testing with $\phi_h=\chi_j$ ($j=1,2,\dotsc,N$), and using the transport property \eqref{eq: transport property}.

The spatially semidiscrete problem \eqref{eq: semidiscrete problem} is equivalent to the following ODE system for the vector $\alpha\t=(\alpha_j\t)_{j=1}^N \in\R^N$, collecting the nodal values of $U_h(\cdot,t)$,
\begin{equation}\label{eq: ODE system}
    \begin{alignedat}{2}
        \diff \Big(M\t \alpha\t\Big) + A\t \alpha\t &= 0, \\
        \alpha(0) &= \alpha_0,
    \end{alignedat}
\end{equation}
where the evolving mass matrix $M\t$ and the stiffness matrix $A\t$ are defined as
\begin{equation*}
    M(t)|_{kj} = \int_{\Ga_h\t} \chi_j \chi_k \andquad A\t|_{kj} =
    \int_{\Ga_h\t} \nbgh \chi_j \cdot \nbgh  \chi_k ,
\end{equation*}
for $j,k = 1,2, \dotsc, N$.

\subsection{Lifts}
For the error analysis we need to compare functions on different surfaces. This is conveniently done by the
 \emph{lift operator}, which was introduced in \cite{Dziuk88} and further investigated in \cite{DziukElliott_ESFEM}, \cite{DziukElliott_L2}. The lift operator maps a function on the discrete surface onto a function on the exact surface.

Let $\Ga_h(t)$ be a $k$-order approximation to the exact surface $\Ga(t)$. Using the oriented distance function $d$ (cf.\ Section~2.1 of \cite{DziukElliott_ESFEM}), the lift of a continuous function $\eta_h \colon \Ga_h(t) \to \R$ is defined as
\begin{equation*}
    \eta_{h}^{l}(p,t) := \eta_h(x,t), \qquad x\in\Ga(t),
\end{equation*}
where for every $x\in \Ga_h(t)$ the point $p=p(x,t)\in\Ga(t)$ is uniquely defined via
\begin{equation}
\label{eq: lift defining equation}
    p = x - \nu(p,t) d(x,t).
\end{equation}
By $\eta^{-l}$ we denote the function on $\Ga_h(t)$ whose lift is $\eta$.

In particular we will often use the space of lifted basis functions
\begin{equation*}
    (S_h\t)^l=(S_h^k\t)^l = \big\{ \phi_h^l \ | \ \phi_h \in S_h^k\t \big\}.
\end{equation*}

\section{Convergence estimates}
\label{section: main result}

\subsection{Convergence of the semidiscretisation}
We are now formulate the convergence theorem for the semidiscretisation using high-order evolving surface finite elements. This result is the higher order extension of Theorem~4.4 in \cite{DziukElliott_L2}.

\begin{theorem}
\label{theorem: semidiscrete convergence}
    Consider the evolving surface finite element method of order $k$ as space discretisation of the parabolic problem \eqref{eq: strong form}. Let $u$ be a sufficiently smooth solution of the problem, and assume that the initial value satisfies
    \begin{equation*}
        \| u_h^0 - u(\cdot,0) \|_{L^2(\Ga(0))} \leq C_0 h^{k+1}.
    \end{equation*}
    Then there exists $h_0>0$ \st\ for $h\leq h_0$, the following error estimate holds for $t \leq T$:
    \begin{equation*}
        \|u_h(\cdot,t) - u(\cdot,t)\|_{L^2(\Ga(t))} + h\Big( \int_0^t \|\nb_{\Ga(s)} (u_h(\cdot,s) - u(\cdot,s)\|_{L^2(\Ga(s))}^2 \d s  \Big)^{\frac{1}{2}} \leq C h^{k+1} .
    \end{equation*}
    The constant $C$ is independent of $h, \ \tau$ and $n$, but depends on $T$.
\end{theorem}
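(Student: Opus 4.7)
The plan is to follow the Ritz projection splitting familiar from \cite{DziukElliott_L2}, adapted to the evolving setting through the modified (generalised) Ritz map $\Rh$ whose high-order error bounds are prepared in Section~\ref{section: Ritz}. Writing $\tilde e_h = U_h - \Rh u \in S_h\t$ on $\Ga_h\t$ with lift $e_h = \tilde e_h^l$ on $\Gat$, split
\[
u_h - u = e_h + \bigl((\Rh u)^l - u\bigr).
\]
The second term is of order $h^{k+1}$ in $L^2(\Gat)$ and $h^k$ in $H^1(\Gat)$, uniformly in $t$, and the same kind of bound holds for its material derivative, by the results of Section~\ref{section: Ritz}. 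Therefore it suffices to prove the claimed estimate for the FEM part $e_h$ alone.

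Next I would derive the error equation for $\tilde e_h$. Inserting $\Rh u$ into \eqref{eq: semidiscrete problem}, subtracting the lifted weak formulation \eqref{eq: weak form} and expressing all integrals on $\Ga_h\t$, we obtain
\[
\diff \int_{\Ga_h\t} \tilde e_h \phi_h + \int_{\Ga_h\t} \nbgh \tilde e_h \cdot \nbgh \phi_h - \int_{\Ga_h\t} \tilde e_h \mat_h \phi_h = \langle r_h, \phi_h\rangle_t \qquad (\forall\,\phi_h \in S_h\t),
\]
where the residual $r_h$ is the sum of (i) the \emph{geometric perturbations} of the mass, stiffness and transport bilinear forms between $\Ga_h\t$ and $\Gat$, and (ii) the Ritz defect coming from $\Rh u$ failing to solve the continuous equation exactly. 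By the high-order geometric estimates of Section~\ref{section: geometric estimates} (extending Demlow's stationary results to the evolving case) together with the Ritz-map bounds of Section~\ref{section: Ritz}, both contributions satisfy an $h^{k+1}$ dual-norm bound, with constants controlled by Sobolev norms of $u$ and $\mat u$.

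For the energy estimate, I would work in the matrix--vector form \eqref{eq: ODE system}, choose $\phi_h = \tilde e_h$ and use the discrete Leibniz rule
\[
\diff \int_{\Ga_h\t} \tilde e_h^{\,2} = 2\int_{\Ga_h\t} \tilde e_h \mat_h \tilde e_h + \int_{\Ga_h\t} \tilde e_h^{\,2}\, \nbgh \cdot V_h,
\]
and its analogue for the stiffness bilinear form, to arrive at an inequality of the type
\[
\diff \| \tilde{\mathbf e} \|_{\mass\t}^2 + \| \tilde{\mathbf e} \|_{\stiff\t}^2 \le C \| \tilde{\mathbf e} \|_{\mass\t}^2 + C h^{2(k+1)}.
\]
Gronwall's inequality and the norm equivalences between discrete and lifted norms from Section~\ref{section: geometric estimates} then yield $\|e_h(\cdot,t)\|_{L^2(\Gat)}^2 + \int_0^t \|\nbg e_h(\cdot,s)\|_{L^2(\Ga(s))}^2 \, \d s \le C h^{2(k+1)}$. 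Combined with the Ritz bounds for $(\Rh u)^l - u$ this produces the stated estimate; the explicit factor $h$ in front of the $H^1$-in-time-$L^2$ term exactly absorbs the one-power loss of the $H^1$ Ritz error.

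The principal obstacle is achieving the full order $h^{k+1}$ on the right-hand side $\langle r_h, \phi_h\rangle_t$ when the test function is only controlled in the $\stiff\t$-norm, and doing so simultaneously for the material derivative of $\Rh u - u$, which appears once the time derivative hits the mass term. This is exactly why the \emph{modified} Ritz map of \cite{LubichMansour_wave} is needed in place of the classical elliptic projection, and why the higher-order bilinear-form perturbation lemmas of Section~\ref{section: geometric estimates} must be proved with sharp, elementwise geometric constants; once these ingredients are in hand the Gronwall argument above is essentially routine.
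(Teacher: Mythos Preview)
Your proposal is correct and follows essentially the same route as the paper: the paper's own proof merely states that one repeats the arguments of Section~7 in \cite{DziukElliott_L2} with the high-order geometric estimates (Lemma~\ref{lemma: geometric est}), bilinear-form perturbation bounds (Lemma~\ref{lemma: geometric perturbation errors}), and Ritz map error estimates (Theorems~\ref{theorem: Ritz error} and \ref{theorem: Ritz mat error}) substituted in, which is precisely the Ritz splitting plus energy/Gronwall argument you have outlined. Your identification of the key difficulty---obtaining the full $h^{k+1}$ residual bound against $H^1$-controlled test functions, and the need for the modified Ritz map to handle $\mat_h(\P u - u)$---is exactly what drives the preparatory Sections~\ref{section: geometric estimates} and~\ref{section: Ritz}.
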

The proof of this result is postponed to a later section, after we have shown some preparatory results.

\subsection{Time discretisation: Backward differentiation formulae}

We apply a $p$-step backward difference formula (BDF) for $p\leq5$ as a discretisation to the ODE system \eqref{eq: ODE system}, coming from the ESFEM space discretisation of the parabolic evolving surface PDE.

We briefly recall the $p$-step BDF method applied to the system \eqref{eq: ODE system} with step size $\tau>0$:
\begin{equation}
\label{def: BDF}
    \frac{1}{\tau} \sum_{j=0}^p \delta_j M(t_{n-j})\alpha_{n-j} + A(t_n)\alpha_n = 0, \qquad (n \geq p),
\end{equation}
where the coefficients of the method are given by $\delta(\zeta)=\sum_{j=0}^p \delta_j \zeta^j=\sum_{\ell=1}^p \frac{1}{\ell}(1-\zeta)^\ell$, while the starting values are $\alpha_0, \alpha_1, \dotsc, \alpha_{p-1}$. The method is known to be $0$-stable for $p\leq6$ and have order $p$ (for more details, see Chapter~V. of \cite{HairerWannerII}).

In the following result we compare the fully discrete solution
\begin{equation*}
    U_h^n = \sum_{j=1}^N \alpha_j^n \chi_j( \, \cdot \,,t_n),
\end{equation*}
obtained by solving \eqref{def: BDF} and the Ritz map $\Pt : H^{1}(\Gat) \to S_h^k\t$ ($t\in[0,T]$) of the sufficiently smooth solution $u$. The precise definition of the Ritz map is given later.
The following error bound was shown in \cite{LubichMansourVenkataraman_bdsurf} for BDF methods up to order five. See also \cite{diss_Mansour}.
\begin{theorem}[\cite{LubichMansourVenkataraman_bdsurf}, Theorem~5.1]
\label{theorem: BDF error estimates}
    Consider the parabolic problem \eqref{eq: strong form}, having a sufficiently smooth solution for $0\leq t \leq T$. Couple the $k$-order evolving surface finite element method as space discretisation with time discretisation by a $p$-step backward difference formula with $p\leq5$. Assume that the Ritz map of the solution has continuous discrete material derivatives up to order $p+1$. Then there exists $\tau_0>0$, independent of $h$, \st\ for $\tau \leq \tau_0$, for the error $E_h^n=U_h^n-\Pt u(\cdot,t_n)$ the following estimate holds for $t_n=n\tau \leq T$:
    \begin{align*}
        \|E_h^n\|_{L^2(\Ga_h(t_n))} +& \Big( \tau \sum_{j=1}^n \|\nb_{\Ga_h(t_j)} E_h^j \|_{L^2(\Ga_h(t_j))}^2 \Big)^{\frac{1}{2}} \\
        \leq C \tilde{\beta}_{h,p} \tau^{p} +& \Big( \tau \sum_{j=1}^n \|R_h(\cdot,t_j) \|_{H^{-1}_{h}(\Ga_h(t_j))}^2 \Big)^{\frac{1}{2}} + C \max_{0\leq i \leq p-1} \|E_h^i\|_{L^2(\Ga_h(t_i))},
    \end{align*}
    where the constant $C$ is independent of $h, n$ and $\tau$, but depends on $T$. Furthermore
    \begin{equation*}
        \tilde{\beta}_{h,p}^2 = \int_0^T \sum_{\ell=1}^{p+1} \| (\mat_h)^{(\ell)} (\Pt u)(\cdot,t) \|_{L^2(\Ga_h(t))} \d t.
    \end{equation*}
    The $H_h\inv(\Ga_h\t)$-norm, of the ESFEM residual $R_h$, is defined as
    \begin{equation*}
        \|R_h(\cdot,t) \|_{H_h^{-1}(\Ga_h\t)} = \sup_{0\neq\phi_h\in S_h^k\t} \frac{m_h(R_h(\cdot,t),\phi_h)}{\|\phi_h\|_{H^{1}(\Ga_h\t)}} \ .
    \end{equation*}
\end{theorem}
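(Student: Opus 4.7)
The plan is to follow the BDF-on-evolving-surfaces strategy of the cited paper. I pass to the nodal-vector formulation of Section~\ref{section: ESFEM}, writing $\alpha^n$ for the nodal values of $U_h^n$ and $\alpha_*^n$ for those of $\Pt u(\cdot,t_n)$. Inserting $\alpha_*^n$ into the BDF scheme \eqref{def: BDF} leaves, on the right-hand side, a defect that splits into two parts: a BDF consistency defect $\bfr^n_{\mathrm{BDF}}$ obtained by replacing the exact time derivative of $t\mapsto M(t)\alpha_*(t)$ by its BDF approximation, and a defect $\bfr^n_{\mathrm{Ritz}}$ equal to the nodal-vector representation of the ESFEM residual $R_h(\cdot,t_n)$. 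Subtracting the two discrete equations produces the error equation
\begin{equation*}
\frac{1}{\tau}\sum_{j=0}^p \delta_j M(t_{n-j})\,\bfe^{n-j} + A(t_n)\,\bfe^n \;=\; \bfr^n_{\mathrm{BDF}} + \bfr^n_{\mathrm{Ritz}}, \qquad \bfe^n := \alpha^n - \alpha_*^n.
\end{equation*}
A $(p+1)$-th order Taylor expansion of $M(t)\alpha_*(t)$ around $t_n$, together with the transport property \eqref{eq: transport property}, bounds $\bfr^n_{\mathrm{BDF}}$ by time integrals of $(\mat_h)^{(\ell)}(\Pt u)$ for $\ell\leq p+1$, giving rise to the factor $\tilde\beta_{h,p}\,\tau^p$.

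The stability ingredient is the Nevanlinna--Odeh multiplier technique, which is precisely available for $p\leq 5$: there exist $\eta_0=1,\eta_1,\dotsc,\eta_p\in\R$ with $\sum_{k\geq 1}|\eta_k|<1$ such that $\Re\bigl(\delta(\zeta)/(1-\sum_k\eta_k\zeta^k)\bigr)>0$ on the closed unit disc. I test the error equation in the $M(t_n)$ inner product against $\bfw^n := \bfe^n - \sum_{k=1}^p \eta_k\,\bfe^{n-k}$. By Dahlquist's equivalence theorem, this converts the BDF sum into a telescoping positive definite quadratic form $G(\bfe^n,\dotsc,\bfe^{n-p+1})$ dominating $\|\bfe^n\|_{M(t_n)}^2$, while the stiffness term produces $\|\bfe^n\|_{A(t_n)}^2$ up to cross terms of the form $\langle A(t_n)\bfe^n,\eta_k\bfe^{n-k}\rangle$ that are absorbable by Young's inequality.

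The main obstacle is the variation of the mass matrix across the BDF stencil, which breaks naive G-stability. I overcome this by writing $M(t_{n-j}) = M(t_n) + (M(t_{n-j})-M(t_n))$ and using the Lipschitz estimates $\|M(t)-M(s)\|_\infty + \|A(t)-A(s)\|_\infty \leq C|t-s|$, which are consequences of the smooth prescribed velocity combined with the transport property; the resulting $O(j\tau)$ perturbations are absorbed into a discrete Gronwall loop after multiplication by $\tau$ and summation. On the right-hand side, $\bfr^n_{\mathrm{BDF}}$ is handled by Cauchy--Schwarz against $\bfw^n$ in $M(t_n)$, while $\bfr^n_{\mathrm{Ritz}}$ is estimated by duality: writing $(\bfr^n_{\mathrm{Ritz}},\bfw^n)_{\ell^2} = m_h(R_h(\cdot,t_n),w_h^n)$, where $w_h^n\in S_h^k(t_n)$ has nodal vector $\bfw^n$, the $H_h^{-1}$ norm appearing in the statement controls this by $\|w_h^n\|_{H^1(\Gah(t_n))}$, and the latter is dominated by $(\|\bfw^n\|_{M(t_n)}^2+\|\bfw^n\|_{A(t_n)}^2)^{1/2}$, which is absorbed into $G$ and into $\tau\|\bfe^n\|_{A(t_n)}^2$. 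Summing from $n=p$ up to the target index, applying discrete Gronwall, and accounting for the starting values entering $G$ at step $n=p$ produces the stated bound; the final conversion to surface norms uses the identities $\|\bfv\|_{M(t)}^2 = \|v_h\|_{L^2(\Gah(t))}^2$ and $\|\bfv\|_{A(t)}^2 = \|\nbgh v_h\|_{L^2(\Gah(t))}^2$.
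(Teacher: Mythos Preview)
The paper does not give its own proof of this theorem. It is quoted verbatim from \cite{LubichMansourVenkataraman_bdsurf}, and the only argument offered here is the Remark immediately following the statement: the proof in that reference depends solely on the ODE formulation \eqref{eq: ODE system} and on the matrix perturbation bounds
\[
w^T\bigl(M(s)-M(t)\bigr)z \leq (e^{\mu(s-t)}-1)\|w\|_{M(t)}\|z\|_{M(t)},\qquad
w^T\bigl(A(s)-A(t)\bigr)z \leq (e^{\kappa(s-t)}-1)\|w\|_{A(t)}\|z\|_{A(t)},
\]
and since these hold for high-order basis functions as well, the cited proof applies without change.

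Your proposal is a faithful sketch of the argument in \cite{LubichMansourVenkataraman_bdsurf}: the error equation in nodal form, the defect split into a BDF truncation part and the Ritz residual part, the Nevanlinna--Odeh multiplier technique (available exactly for $p\le 5$) combined with G-stability, and a discrete Gronwall step. One small point: where you invoke operator-norm Lipschitz bounds $\|M(t)-M(s)\|_\infty+\|A(t)-A(s)\|_\infty\le C|t-s|$, the actual mechanism used in the reference (and highlighted in the paper's Remark) is the energy-norm form displayed above; this is what lets the cross terms be absorbed cleanly in the $M(t_n)$- and $A(t_n)$-inner products without losing $h$-uniformity. With that adjustment your outline matches the cited proof.
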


\begin{remark}
    The most important technical tools in the proof of Theorem~\ref{theorem: BDF error estimates}, and also in the proof of the BDF stability result Lemma~4.1 in \cite{LubichMansourVenkataraman_bdsurf}, are the ODE formulation \eqref{eq: ODE system}, and the key estimates first shown in Lemma~4.1 in \cite{DziukLubichMansour_rksurf}. In this result the following estimates are shown: there exist $\mu,\kappa>0$ such that, for $w,z \in \R^N$,
    \begin{equation*}
        w^T \big( M(s) - M\t\big) z \leq (e^{\mu(s-t)}-1) \|w\|_{M\t}\|z\|_{M\t} , \quad
        w^T \big( A(s) - A\t\big) z \leq (e^{\kappa(s-t)}-1) \|w\|_{A\t}\|z\|_{A\t} .
    \end{equation*}
    The proof of these bounds only involve basic properties of the mass and stiffness matrix $M\t$ and $A\t$, hence it is independent of the order of the basis functions. Therefore, the high-order ESFEM versions of the above inequalities also hold. Therefore the original results of \cite{LubichMansourVenkataraman_bdsurf} also hold here.
\end{remark}

\subsection{Convergence of the full discretisation}

We are now in the position to formulate one of the main results of this paper, which yields optimal-order error bounds for high-order finite element semidiscretisation coupled to BDF methods up to order five applied to an evolving surface PDE.

\begin{theorem}[$k$-order ESFEM and BDF-$p$]
\label{theorem: ESFEM BDF error}
    Consider the evolving surface finite element method of order $k$ as space discretisation of the parabolic problem \eqref{eq: strong form}, coupled to the time discretisation by a $p$-step backward difference formula with $p\leq5$. Let $u$ be a sufficiently smooth solution of the problem, and assume that the starting values are satisfying (with $\P u=(\Pt u)^l)$)
    \begin{equation*}
        \max_{0 \leq i \leq p-1} \| u_h^i - (\P u)(\cdot,t_i) \|_{L^2(\Ga(t_i))} \leq C_0 h^{k+1}.
    \end{equation*}
    Then there exists $h_0>0$ and $\tau_0>0$, \st\ for $h\leq h_0$ and $\tau \leq \tau_0$, the following error estimate holds for $t_n=n\tau \leq T$:
    \begin{equation*}
        \|u_h^n - u(\cdot,t_n)\|_{L^2(\Ga(t_n))} + h\Big( \tau \sum_{j=1}^n \|\nb_{\Ga(t_j)} (u_h^j - u(\cdot,t_j))\|_{L^2(\Ga(t_j))}^2 \! \Big)^{\frac{1}{2}} \leq C \big( \tau^{p} \! + h^{k+1} \big).
    \end{equation*}
    The constant $C$ is independent of $h, \ \tau$ and $n$, but depends on $T$.
\end{theorem}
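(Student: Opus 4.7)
The plan is to reduce the fully discrete error estimate to the two ingredients already developed in the paper: the BDF error bound of Theorem~\ref{theorem: BDF error estimates} (applied on the discrete surface $\Gah(t_n)$) and the semidiscrete/Ritz-map analysis that will be proved in Sections~\ref{section: geometric estimates}--\ref{section: Ritz}. I would first split the error at each time level using the modified Ritz map, writing
\begin{equation*}
    u_h^n - u(\cdot,t_n) = \bigl(u_h^n - (\P u)(\cdot,t_n)\bigr) + \bigl((\P u)(\cdot,t_n) - u(\cdot,t_n)\bigr),
\end{equation*}
and work on $\Ga(t_n)$ by lifting; equivalently, for the first summand I would work on $\Gah(t_n)$ with $E_h^n = U_h^n - \Pt u(\cdot,t_n)$ and then lift, using the standard norm equivalence between $\Gah(t)$ and $\Ga(t)$ (which costs only constants independent of $h$). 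The Ritz-map defect $(\P u - u)$ in $L^2(\Ga(t_n))$ and the gradient term in $L^2\!\bigl(0,T;L^2(\Ga(t))\bigr)$ will be estimated by the high-order Ritz error bounds proved in Section~\ref{section: Ritz}, yielding the $h^{k+1}$ and $h^{k}$ contributions, respectively.

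For the term $E_h^n$ I would apply Theorem~\ref{theorem: BDF error estimates} and verify, one by one, the three terms appearing on its right-hand side. The starting-value term $\max_{0\leq i\leq p-1}\|E_h^i\|_{L^2(\Gah(t_i))}$ is controlled by the hypothesis $\|u_h^i - (\P u)(\cdot,t_i)\|_{L^2(\Ga(t_i))}\leq C_0 h^{k+1}$ together with the norm equivalence between lifted and unlifted functions. The term $\tilde\beta_{h,p}$, which involves discrete material derivatives of $\Pt u$ up to order $p+1$, will be bounded uniformly in $h$ using the material-derivative estimates of the Ritz map from Section~\ref{section: Ritz}; these derivatives only have to be finite, so no power of $h$ is lost here. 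The residual $R_h(\cdot,t)$ of the Ritz map, measured in the discrete $H_h^{-1}(\Gah(t))$ dual norm, is where the geometric perturbation estimates of Section~\ref{section: geometric estimates} enter: the residual arises from the difference between the exact PDE tested with $\phi_h^l$ on $\Gat$ and its discrete counterpart on $\Gah(t)$, which after using the definition of the modified Ritz map reduces to geometric perturbation terms in the bilinear forms $m$, $a$ and $g$. These perturbation bounds are of order $h^{k+1}$ (in the right negative norm), producing the $h^{k+1}$ contribution in the BDF estimate.

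Putting the three contributions together, Theorem~\ref{theorem: BDF error estimates} gives $\|E_h^n\|_{L^2(\Gah(t_n))} + h\bigl(\tau\sum_{j}\|\nbgh E_h^j\|_{L^2(\Gah(t_j))}^2\bigr)^{1/2} \leq C(\tau^p + h^{k+1})$; lifting and combining with the Ritz error estimate yields the asserted bound in $L^2(\Ga(t_n))$ and in the weighted $L^2(H^1)$ norm on the exact surfaces. The smallness conditions on $h\leq h_0$ and $\tau\leq \tau_0$ are inherited respectively from the well-posedness of the lift (Section~\ref{section: ESFEM}) and from Theorem~\ref{theorem: BDF error estimates}.

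The main obstacle is the optimal-order bound $\|R_h(\cdot,t)\|_{H_h^{-1}(\Gah(t))} \leq C h^{k+1}$: it requires the careful high-order geometric error analysis, in particular sharp perturbation estimates of $m$, $a$ and the velocity-related bilinear form $g$ between $\Gat$ and its $k$-order interpolant $\Gah(t)$, together with control of the material derivatives of the Ritz map and of the lift-induced change-of-variables factors. Everything else in the argument above is a routine assembly once these ingredients from Sections~\ref{section: geometric estimates}--\ref{section: Ritz} are in place.
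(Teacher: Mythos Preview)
Your proposal is correct and follows essentially the same approach as the paper: the error is split via the Ritz map, the first summand $E_h^n=U_h^n-\Pt u(\cdot,t_n)$ is controlled by Theorem~\ref{theorem: BDF error estimates} together with the residual bound $\|R_h\|_{H_h^{-1}}\leq Ch^{k+1}$ (stated separately as Theorem~\ref{theorem: res bound}) and the Ritz-map material-derivative estimates, and the second summand is handled directly by the Ritz error bounds of Theorems~\ref{theorem: Ritz error}--\ref{theorem: Ritz mat error}. Your identification of the $H_h^{-1}$ residual estimate as the main analytic ingredient matches exactly what the paper isolates and proves in Section~\ref{section: error bounds}.
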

The proof of this result is also postponed to a later section, after we have shown some preparatory results.

\begin{remark}
\label{remark: Runge--Kutta extension}
    We remark here that an analogous fully discrete convergence result is readily available for algebraically stable implicit Runge--Kutta methods (such as the Radau IIA methods), since the Runge--Kutta analog of Theorem~\ref{theorem: BDF error estimates} have been proved in \cite{DziukLubichMansour_rksurf}. The combination of this result with our high-order semidiscrete error bounds (Theorem~\ref{theorem: res bound}) proves the Runge--Kutta analog of Theorem~\ref{theorem: ESFEM BDF error}.
\end{remark}

\section{Geometric estimates}
\label{section: geometric estimates}

In this section we present further notations and some technical lemmas that will be used later on in the proofs leading to the convergence result. These estimates are high-order analogs of some previous results proved in \cite{Dziuk88}, \cite{DziukElliott_ESFEM}, \cite{DziukElliott_L2}, \cite{Demlow2009} and \cite{diss_Mansour}.

\subsection{Geometric approximation results}
\label{section: lift}

\newcommand{\pr}{\textnormal{Pr}}
\newcommand{\Id}{\textnormal{Id}}
\newcommand{\wein}{\mathcal{H}}

In the following we state and prove estimates for the errors resulted by the geometric surface approximation.

\begin{lemma}[equivalence of norms, \cite{Dziuk88}, \cite{Demlow2009}]
\label{lemma: equivalence of discrete norms}
    Let $\eta_h : \Ga_h(t) \to \R$ with lift $\eta_h^l : \Ga(t) \to \R$. Then the discrete and continuous $L^p$- and Sobolev norms are equivalent, independently of the mesh size $h$.
\end{lemma}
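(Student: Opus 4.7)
The plan is to transfer integrals from $\Ga_h(t)$ to $\Ga(t)$ via the change of variables induced by the lift map \eqref{eq: lift defining equation} and control the corresponding Jacobian (area element ratio) and the tangential gradient transformation matrix uniformly in $h$. Since the map $x\mapsto p(x,t)$ is a $C^1$-diffeomorphism for $h\le h_0$ small enough (the distance function $d$ is smooth in a tubular neighborhood of $\Ga(t)$, and $\Ga_h(t)$ is contained in such a neighborhood by the interpolation property), the change of variables is well defined.

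First I would establish the $L^p$-equivalence. For $\eta_h\colon \Ga_h(t)\to\R$ with lift $\eta_h^l$, write
\[
\int_{\Ga(t)} |\eta_h^l|^p = \int_{\Ga_h(t)} |\eta_h|^p\, \delta_h,
\]
where $\delta_h$ is the area element ratio. The key input is the higher-order geometric estimate from Proposition~2.3 of \cite{Demlow2009} (and Section~5 of \cite{DziukElliott_ESFEM}), which gives $\|1-\delta_h\|_{L^\infty(\Ga_h(t))} \le C h^{k+1}$. In particular $\delta_h$ stays uniformly bounded away from $0$ and $\infty$ for $h\le h_0$, yielding $L^p$-equivalence with constants independent of $h$.

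Next I would handle the gradient norms. At a point $x\in\Ga_h(t)$ with lift $p$, the tangential gradients are related by
\[
\nbg \eta_h^l(p) = (\Id - d\,\wein)\inv (\Id - \normal\otimes\normal_h/(\normal\cdot\normal_h))\, \nb_{\Ga_h(t)}\eta_h(x),
\]
where $\wein$ is the Weingarten map of $\Ga(t)$ (cf.\ \cite{DziukElliott_ESFEM}, \cite{Demlow2009}); both factors and their inverses have $L^\infty$ norms bounded uniformly in $h$ by the interpolation property ($|d|\le Ch^{k+1}$ on $\Ga_h(t)$ and $\normal\cdot\normal_h\to 1$ uniformly). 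Combined with the area element estimate, integrating the squared norm gives
\[
\|\nbg \eta_h^l\|_{L^p(\Ga(t))} \sim \|\nbgh \eta_h\|_{L^p(\Ga_h(t))},
\]
with constants independent of $h$. Higher Sobolev norms $H^k$ then follow by iterating this argument elementwise, using that all derivatives of the transformation matrix and of $d$ are bounded on each curved element of $\Th^k(t)$ (the bounds are uniform because the reference element parametrisation is regular; see Proposition~2.3 and Lemma~4.2 of \cite{Demlow2009}).

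The only mildly delicate point is the uniform invertibility of the gradient transformation matrix: I would verify it by expanding $(\Id - d\wein)\inv = \Id + \bigo(h^{k+1})$ and noting that $\normal\cdot\normal_h = 1 + \bigo(h^{2k})$, both of which follow from the high-order interpolation properties of $\Ga_h^k(t)$. Everything else is a direct change of variables. The result is stated as Lemma~4.1 in \cite{Demlow2009} in the stationary case, and the time dependence enters only through parameters that are uniformly bounded on $[0,T]$, so the bounds are uniform in $t$ as well.
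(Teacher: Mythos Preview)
The paper does not give its own proof of this lemma: it is stated with attribution to \cite{Dziuk88} and \cite{Demlow2009} and followed only by an illustrative display of the $L^2$ and $H^1$ equivalences. Your argument is precisely the standard one underlying those references---change of variables via $\delta_h$ for the $L^p$ norms, and control of the gradient transformation $(\Id - d\wein)$ together with the normal correction for the $H^1$ norms---so there is nothing to compare; your sketch simply supplies the proof that the paper chose to cite rather than reproduce.

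One minor remark: the paper only ever uses the $L^2$ and $H^1$ equivalences, so your discussion of iterating to higher Sobolev norms, while plausible, goes beyond what is actually needed (and beyond what is carefully written out in the cited sources for the evolving-surface setting).
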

For instance, there is a constant $c>0$ such that for all $h$:
\begin{align*}
    c\inv \|\eta_h\|_{L^2(\Ga_h(t))} \leq &\ \|\eta_h^l\|_{L^2(\Ga(t))} \leq c\|\eta_h\|_{L^2(\Ga_h(t))} , \\
    c\inv \|\eta_h\|_{H^1(\Ga_h(t))} \leq &\ \|\eta_h^l\|_{H^1(\Ga(t))} \leq c\|\eta_h\|_{H^1(\Ga_h(t))} .
\end{align*}

We now turn to the study of some geometric concepts and their errors. By $\delta_h$ we denote the quotient between the \co\ and discrete surface measures, $\d A$ and $\d A_h$, defined as $\delta_h \d A_h = \d A$. Further, we recall that
$\pr$ and $\pr_h$ are the projections onto the tangent spaces of $\Ga(t)$ and $\Ga_h(t)$, respectively.
We further set, from \cite{DziukElliott_L2},
\begin{equation*}
    Q_h = \frac{1}{\delta_h} (\Id - d\wein)\pr\pr_h\pr(\Id - d\wein),
\end{equation*}
where $\wein$ ($\wein_{ij} = \pa_{x_j}\nu_i$) is the (extended) Weingarten map. Using this notation and \eqref{eq: lift defining equation}, in the proof of Lemma~5.5 \cite{DziukElliott_L2} it is shown that
\begin{equation}
\label{eq: gradient transformation}
    \nbgh \phi_h(x) \cdot \nbgh \phi_h(x) = \delta_h Q_h \nbg \phi_h^l(p) \cdot \nbg \phi_h^l(p).
\end{equation}

For these quantities we show some results analogous to their linear ESFEM version showed in Lemma 5.4 in \cite{DziukElliott_L2}, Proposition 2.3 in \cite{Demlow2009}, or Lemma~6.1 in \cite{diss_Mansour}.

Later on the following estimates will play a key technical role.
\begin{lemma}
\label{lemma: geometric est}
For $\Ga_h^k\t$ and $\Ga\t$ as above, we have the geometric approximation estimates:
    \begin{align*}
        &\ \|d\|_{L^\infty(\Ga_h(t))} \leq c h^{k+1} , \qquad
        &&\ \|1-\delta_h\|_{L^\infty(\Ga_h(t))} \leq c h^{k+1} , \\
        &\ \|\nu - \nu_h\|_{L^\infty(\Ga_h(t))} \leq c h^{k}  \qquad
        &&\ \textrm{and} \\
        &\ \|\Id - \delta_h Q_h\|_{L^\infty(\Ga_h(t))} \leq c h^{k+1} , \qquad
        &&\ \|(\mat_h)^{(\ell)}d\|_{L^\infty(\Ga_h(t))} \leq c h^{k+1} , \\
        &\ \|(\mat_h)^{(\ell)} \delta_h\|_{L^\infty(\Ga_h(t))} \leq c h^{k+1} , \qquad
        &&\ \|\pr((\mat_h)^{(\ell)}Q_h)\pr\|_{L^\infty(\Ga_h)} \leq ch^{k+1} .
    \end{align*}
    with constants depending only on $\GT$, but not on $h$ or $t$.
\end{lemma}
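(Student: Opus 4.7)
\emph{Overall strategy.} My plan is to handle the first four (static) bounds as time-by-time versions of the known high-order stationary estimates, and to derive the three material-derivative bounds by a chain-rule argument along the discrete flow of $V_h$, exploiting that the nodes $a_i(t)$ move with the smooth velocity $v$ of the exact surface.

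\emph{Stationary estimates.} At each fixed $t$, $\Ga_h^k(t)$ is the $k$-th order Lagrange interpolant of $\Ga(t)$ through the nodes $a_i(t)\in\Ga(t)$, so the pointwise high-order interpolation analysis of Demlow (Proposition~2.3 in Demlow 2009) and the identity for $Q_h$ derived in Lemma~5.5 of Dziuk--Elliott $L^2$ apply unchanged at every $t$. This yields $\|d\|_{L^\infty(\Ga_h(t))}\leq ch^{k+1}$, $\|\nu-\nu_h\|_{L^\infty(\Ga_h(t))}\leq ch^{k}$, $\|1-\delta_h\|_{L^\infty(\Ga_h(t))}\leq ch^{k+1}$ and $\|\Id-\delta_h Q_h\|_{L^\infty(\Ga_h(t))}\leq ch^{k+1}$. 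The latter two are sharper than the $h^k$ normal estimate alone would suggest: once the defining relation $p=x-\nu d$ and the explicit formula for $Q_h$ are expanded, the normal-direction contributions are $\pr$-suppressed and only the tangential parts remain, which are precisely the $h^{k+1}$ interpolation errors in $d$ and $1-\delta_h$. Uniformity in $t\in[0,T]$ follows from the smooth dependence of $\Ga(t)$ and of its derived geometric quantities on $t$ over the compact interval.

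\emph{Material-derivative estimates.} Since $\dot a_i(t)=v(a_i(t),t)$ and the transport property \eqref{eq: transport property} gives $\mat_h\chi_j=0$, the flow of $V_h$ maps $\Ga_h(0)$ diffeomorphically onto $\Ga_h(t)$. Pulling back to a fixed reference element, $\mat_h$ becomes ordinary $\partial_t$ acting on a $C^\infty$-in-$t$ family of elementwise polynomials, and each of the relevant geometric quantities $d$, $\delta_h$, $\pr(\Id-\delta_h Q_h)\pr$ becomes the $k$-th order Lagrange interpolation error of a smooth exact quantity. Differentiating such an interpolation error $\ell$ times in $t$, with $t$-independent reference nodes, again produces an interpolation error of the $\ell$-th time derivative, preserving the $h^{k+1}$ order. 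Combined with the static bounds and the smoothness of $v$, this yields $\|(\mat_h)^{(\ell)}d\|_{L^\infty(\Ga_h(t))}$, $\|(\mat_h)^{(\ell)}\delta_h\|_{L^\infty(\Ga_h(t))}\leq ch^{k+1}$ and $\|\pr((\mat_h)^{(\ell)}Q_h)\pr\|_{L^\infty(\Ga_h)}\leq ch^{k+1}$.

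\emph{Main obstacle.} The delicate point is that naive differentiation of an $O(h^{k+1})$ quantity only gives $O(h^k)$, because each $t$-derivative generically costs one power of $h$. The gain here is structural: $d$, $\delta_h-1$, and $\pr(\Id-\delta_h Q_h)\pr$ vanish identically in $t$ at every node $a_i(t)$, so on each element they are genuine Lagrange interpolation errors of smoothly moving exact data, and the Bramble--Hilbert argument on the reference element transfers without loss to each $\partial_t^\ell$. Without the tangential projections on $Q_h$ only the weaker $h^k$ bound would be available, which is exactly why the statement claims only the projected version $\pr((\mat_h)^{(\ell)}Q_h)\pr$. Carefully tracking these projections and the commutation of $\mat_h$ with the pullback to the reference element is the technical core of the proof; the remaining steps are chain-rule manipulations together with reductions to the static bounds already established.
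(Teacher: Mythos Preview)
Your treatment of the four static bounds is correct and is exactly what the paper does: invoke Demlow's stationary estimates at each fixed $t$ and observe that the constants depend only on $\GT$. For the material-derivative bounds the paper gives essentially no argument of its own --- it simply says these are the higher-order extensions of Lemma~6.1 in Mansour's thesis, proved ``in the exact same way using the bounds of the first three estimates'' --- so your more detailed strategy goes beyond what the paper writes.

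However, your mechanism for the material-derivative bounds has a genuine gap. The node-vanishing claim is correct for $d$: since $a_i(t)\in\Ga(t)$ for all $t$, $d(a_i(t),t)\equiv 0$, and your pullback/Bramble--Hilbert argument indeed yields $\|(\mat_h)^{(\ell)}d\|_{L^\infty}\leq ch^{k+1}$. But it is \emph{false} for $\delta_h-1$ and for $\pr(\Id-\delta_h Q_h)\pr$. At a node $a_i(t)$ the point lies on $\Ga(t)$, yet the tangent planes of $\Ga_h(t)$ and $\Ga(t)$ differ by $O(h^k)$ (this is precisely the normal estimate $\|\nu-\nu_h\|\leq ch^k$). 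Consequently the area ratio $\delta_h$ at a node is $1+O(h^{2k})$, not exactly $1$; and since $d=0$ there, $\delta_h Q_h=\pr\pr_h\pr$ and $\pr(\Id-\delta_h Q_h)\pr=\pr\nu_h\nu_h^{T}\pr=O(h^{2k})$, again not zero. So these quantities are not Lagrange interpolation errors vanishing at the nodes, and the Bramble--Hilbert transfer you invoke does not apply to them as stated.

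The route actually taken in the reference the paper cites is different and does not rely on node-vanishing: one computes $(\mat_h)^{(\ell)}\delta_h$ and $(\mat_h)^{(\ell)}Q_h$ directly via the transport/Leibniz formulas (e.g.\ differentiating $\delta_h\,\d A_h=\d A$ along the two flows yields $\mat_h\delta_h$ in terms of $\nbg\!\cdot v_h-\nbgh\!\cdot V_h$ and $\delta_h$), and then bounds each resulting term using the already-established static estimates together with the velocity error $v-v_h=O(h^{k+1})$ (Lemma~\ref{lemma: velocity estimate}). Your pullback idea is elegant for $d$, but for $\delta_h$ and $Q_h$ you should either switch to this explicit chain-rule computation or supply a separate argument that controls the nonzero nodal values and their time derivatives.
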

\begin{proof}
The first three bounds were shown in Proposition 2.3 of \cite{Demlow2009} for the stationary case. Noting that the constants only depend on $\GT$ these inequalities are shown.

The last four bounds are simply the higher order extensions of the corresponding estimates in Lemma~6.1 of \cite{diss_Mansour}. They can be proved in the exact same way using the bounds of the first three estimates.
\end{proof}

\subsection{Interpolation estimates for evolving surface finite elements}

The following result gives estimates for the error in the interpolation. Our setting follows that of Section 2.5 of \cite{Demlow2009}.

Let us assume that the surface $\Ga(t)$ is approximated by the interpolation surface $\Ga_h^k(t)$. Then for any $w\in H^{k+1}(\Ga(t))$, there is a unique $k$-order surface finite element interpolation $\widetilde I_h^k w \in S_h^k(t)$, furthermore we set $(\widetilde I_h^k w)^l = I_h^k w$.

\newcommand{\Ih}{\widetilde{I}_h}
\begin{lemma}[\cite{Demlow2009}, Proposition~2.7]
\label{lemma: interpolation error}
    Let $w:\GT \to \R$ such that $w\in H^{k+1}(\Ga(t))$ for $0 \leq t \leq T$. There exists a constant $c>0$ depending on $\GT$, but independent of $h$ and $t$ such that, for $0 \leq t \leq T$,
    \begin{align*}
        \|w - I_{h}^k w \|_{L^{2}(\Ga(t))} + h \| \nbg( w - I_{h}^k w) \|_{L^{2}(\Ga(t))} \leq&\ c h^{k+1} \|w\|_{H^{k+1}(\Ga(t))} .
    \end{align*}

    We distinguish the special case of a linear surface finite element interpolation on $\Ga_h^k\t$. For $w \in H^2(\Gat)$ the linear surface finite element interpolant is denoted by $I_h^{(1)} w$, and it satisfies, with $c>0$,
    \begin{align*}
        \|w - I_h^{(1)} w \|_{L^{2}(\Ga(t))} + h \| \nbg( w - I_h^{(1)} w) \|_{L^{2}(\Ga(t))} \leq&\ c h^2 \|w\|_{H^2(\Ga(t))} .
    \end{align*}
\end{lemma}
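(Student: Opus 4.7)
The plan is to apply the stationary-surface interpolation estimate of \cite[Proposition~2.7]{Demlow2009} at each fixed $t\in[0,T]$ and then to verify that the resulting constant is uniform in $t$. Since $\widetilde I_h^k w(\cdot,t)$ is the ordinary Lagrange interpolant in $S_h^k(t)$ of the function $w(\cdot,t)\in H^{k+1}(\Ga(t))$, no time-derivative computations are involved in the estimate itself; the evolving nature enters only through the $t$-dependence of the surface and of the chart maps.

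First, for fixed $t$ the pair $(\Ga(t),\Ga_h^k(t))$ meets Demlow's hypotheses: $\Ga_h^k(t)$ is a $k$-th order interpolation of $\Ga(t)$. The standard argument proceeds elementwise. Pull each curved element $E(t)$ back to a reference simplex $\hat E$ via the polynomial chart $F_E^k(\cdot,t):\hat E\to E(t)$ of degree $k$; apply a Bramble--Hilbert estimate on $\hat E$ to the pullback $w\circ p(\cdot,t)\circ F_E^k(\cdot,t)$ using a $k$-th degree polynomial interpolant; push the resulting $H^s(\hat E)$ estimate back to the lifted element on $\Ga(t)$ via the Jacobians of $F_E^k$ and of the normal projection $p(\cdot,t)$, whose sizes are controlled by the geometric bounds of Lemma~\ref{lemma: geometric est}; and sum over elements to obtain the global $H^1$-estimate. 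The $L^2$ part then follows from the standard Aubin--Nitsche duality argument on $\Ga(t)$.

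Second, to obtain a constant $c$ independent of $t$, one observes that the bounds in the preceding argument depend on the principal curvatures of $\Ga(t)$, on the regularity of $d(\cdot,t)$ in a tubular neighbourhood, and on the shape regularity and chart maps of the curved triangulation $\Th^k(t)$. Since the nodes are transported by the prescribed smooth velocity $v$ and the space-time manifold $\GT$ is compact, each of these families is bounded uniformly for $t\in[0,T]$; hence $c=\sup_{t\in[0,T]}c(t)<\infty$. The special case of linear interpolation on $\Ga_h^k(t)$ is handled by the same argument with the polynomial degree on the reference simplex reduced to one, retaining the same $k$-th order chart maps, and yields $\|w-I_h^{(1)}w\|_{L^2(\Ga(t))}+h\|\nbg(w-I_h^{(1)}w)\|_{L^2(\Ga(t))}\le ch^2\|w\|_{H^2(\Ga(t))}$.

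The main obstacle, and essentially the only novel point relative to \cite{Demlow2009}, is the uniform-in-$t$ control of the shape regularity of $\Th^k(t)$ and of the chart maps $F_E^k(\cdot,t)$: one must check that moving the Lagrange nodes with the smooth velocity $v$ preserves shape regularity and keeps the charts and their inverses uniformly bounded in $C^{k+1}$. Both follow from the smoothness of $v$ on $\GT$ and the compactness of $[0,T]$.
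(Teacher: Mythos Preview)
Your proposal is correct and matches the paper's treatment: the paper does not give a separate proof of this lemma but simply cites \cite[Proposition~2.7]{Demlow2009}, relying (as it does explicitly in the proof of Lemma~\ref{lemma: geometric est}) on the observation that the constants in Demlow's stationary estimates depend only on geometric quantities that are uniformly controlled over the compact space--time manifold $\GT$. Your write-up spells out precisely this reasoning.

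One small correction: the $L^2$ interpolation bound does not require an Aubin--Nitsche duality argument. The Bramble--Hilbert (Deny--Lions) lemma on the reference simplex already yields the full range $\|\hat w-\hat I\hat w\|_{H^j(\hat E)}\le c\,|\hat w|_{H^{k+1}(\hat E)}$ for $j=0,1$, and the standard scaling of the chart maps then gives both the $L^2$ and the $H^1$ estimates simultaneously. Aubin--Nitsche is needed for \emph{projection}-type errors such as the Ritz map in Theorem~\ref{theorem: Ritz error}, where no local polynomial-reproduction argument is available, but not for Lagrange interpolation.
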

Note that for $I_h^{(1)}$ the underlying approximating surface $\Ga_h^k\t$ is still of high order. For $k=1$, $I_h^1$ and $I_h^{(1)}$ simply coincide. The upper $k$ indices are again dropped later on.

\subsection{Velocity of lifted material points and material derivatives}
Following \cite{DziukElliott_L2} and \cite{LubichMansour_wave} we define the velocity of the lifted material points, denoted by $v_h$, and the corresponding discrete material derivative $\mat_h$.

For arbitrary $y\t = p(x\t,t) \in \Gat$, with $x\t \in \Ga_h\t$, cf.\ \eqref{eq: lift defining equation}, we have
\begin{equation}
\label{eq: definition of discrete material velocity}
    \diff y\t = v_h(y\t,t) = \pa_t p(x\t,t) + V_h(x\t,t)\cdot \nb p(x\t,t) ,
\end{equation}
hence for $y=p(x,t)$, see \cite{DziukElliott_L2},
\begin{equation*}
    v_h(y,t) = (\pr - d\wein)(x,t)V_h(x,t) - \pa_t d(x,t)\nu(x,t) - d(x,t) \pa_t \nu(x,t) .
\end{equation*}
Following Section~7.3 of \cite{LubichMansour_wave}, we note that $- \pa_t d(x,t)\nu(x,t)$ is the normal component of $v(p,t)$, and that the other terms are tangent to $\Gat$, hence
\begin{equation}
\label{eq: velocity difference is tangent}
    v-v_h \quad \textnormal{is a tangent vector} .
\end{equation}
It is also important to note that $v_h \neq V_h^l$, cf.\ \cite{DziukElliott_L2}.

The discrete material derivative of the lifted points on $\Gat$ read
\begin{equation*}
    \mat_h \vphi_h = \pa_t\vphi_h + v_h \cdot \vphi_h \qquad (\vphi_h \in (S_h\t)^l) .
\end{equation*}
The lifted basis functions also satisfy the transport property
\begin{equation*}
    \mat_h \chi_j^l = 0 , \qquad (j=1,2,\dotsc,N) .
\end{equation*}

To prove error estimates for higher order material derivatives of the Ritz map, we need high-order bounds for the error between the continuous velocity $v$ and the discrete velocity $v_h$. We generalise here Lemma~5.6 in \cite{DziukElliott_L2} and Lemma~7.3 in \cite{LubichMansour_wave} to the high-order case.
\begin{lemma}
\label{lemma: velocity estimate}
    For $\ell \geq 0$, there exits a constant $c_\ell>0$ depending on $\GT$, but independent of $t$ and $h$, such that
    \begin{align*}
        \|(\mat_h)^{(\ell)}(v - v_h)\|_{L^\infty(\Ga\t)} + h \|\nbg (\mat_h)^{(\ell)}(v - v_h)\|_{L^\infty(\Ga\t)} \leq c_\ell h^{k+1}.
    \end{align*}
\end{lemma}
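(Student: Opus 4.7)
The plan is to start from the explicit formula
\[
v_h(y,t) = (\pr - d\wein)(x,t)\,V_h(x,t) - \pa_t d(x,t)\,\nu(x,t) - d(x,t)\,\pa_t\nu(x,t),
\]
and use the identity $v\cdot\nu = -\pa_t d$ on $\Gat$ (extended to the tubular neighbourhood with an $O(h^{k+1})$ error thanks to $\|d\|_{L^\infty(\Ga_h)} \le ch^{k+1}$ in Lemma~\ref{lemma: geometric est}) together with the orthogonal decomposition $v = \pr v + (v\cdot\nu)\nu$ to rewrite
\[
v - v_h \,=\, \pr(v - V_h) \,+\, d\,(\wein V_h + \pa_t\nu) \,+\, \mathcal{R},
\]
where $V_h$ is the nodal interpolant of $v$ on $\Ga_h^k$, and $\mathcal{R}$ is an $O(h^{k+1})$ remainder coming from evaluating $\pa_t d$ and $\nu$ at points of $\Ga_h^k$ rather than on $\Gat$.

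For $\ell = 0$ the interpolation piece is bounded by a pointwise variant of Lemma~\ref{lemma: interpolation error}: since $v$ is smooth on $\GT$, the standard elementwise $W^{k+1,\infty}$ interpolation estimate on the reference element (transferred by the isoparametric parametrisation of $\Ga_h^k$) gives $\|v-V_h\|_{L^\infty}\le ch^{k+1}$ and $\|\nbgh(v-V_h)\|_{L^\infty}\le ch^{k}$. The second term has order $h^{k+1}$ by the $\|d\|_{L^\infty}$ bound of Lemma~\ref{lemma: geometric est} together with the uniform boundedness of $\wein$, $V_h$ and $\pa_t\nu$ and their tangential gradients; the remainder $\mathcal{R}$ is handled analogously.

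For $\ell\ge 1$ I would proceed by induction using the Leibniz rule for $\mat_h$. The key observation is that the transport property \eqref{eq: transport property}, combined with the fact that each node $a_j(t)$ moves materially on $\Gat$ with the exact velocity $v(a_j(t),t)$, yields an identity of the form $(\mat_h)^{(\ell)} V_h = \Ih\bigl((\mat)^{(\ell)} v\bigr)$, so that the same pointwise interpolation bound applies to the discrete material derivatives of $v-V_h$ uniformly in $\ell$. The material derivatives of the geometric factors $d$, $\nu$, $\wein$, $\delta_h Q_h$ entering the Leibniz expansion of $d(\wein V_h + \pa_t\nu)$ and $\mathcal{R}$ are controlled by the last four bounds of Lemma~\ref{lemma: geometric est} together with the smoothness of $\GT$, again producing order $h^{k+1}$. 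The $L^\infty$ bound on $\nbg(\mat_h)^{(\ell)}(v-v_h)$ follows from the same decomposition with the $W^{1,\infty}$ interpolation estimate, losing a single factor of $h$ that is absorbed into the $h$ weight on the left-hand side.

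The main obstacle is making the identity $(\mat_h)^{(\ell)} V_h = \Ih\bigl((\mat)^{(\ell)} v\bigr)$ precise: $\mat_h$ acts on functions defined on the lifted surface and does not a priori commute with nodal interpolation between $\Gat$ and $\Ga_h^k$, so one has to track the relation between the trajectories of the nodes on $\Gat$, the discrete velocity $V_h$ and the lifted velocity $v_h$ carefully. This is the high-order analogue of the reasoning in Lemma~5.6 of \cite{DziukElliott_L2} and Lemma~7.3 of \cite{LubichMansour_wave}; once it is in hand, the asserted bound follows by putting together the interpolation estimates and the geometric bounds of Lemma~\ref{lemma: geometric est}.
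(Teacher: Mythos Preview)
Your proposal is correct and follows essentially the same route as the paper's proof: the same decomposition $v-v_h=\pr(v-I_hv)+d(\wein I_hv+\pa_t\nu)$, the same use of $L^\infty$ interpolation estimates for the first term, the geometric bounds of Lemma~\ref{lemma: geometric est} for the $d$-term, and the Leibniz rule for $\ell\ge 1$. Two minor remarks. First, the remainder $\mathcal{R}$ you introduce is in fact zero: since $\nu$ is extended constantly along normal lines one has $\nu(x,t)=\nu(p,t)$, and the paper records earlier that $-\pa_t d(x,t)\,\nu(x,t)$ is exactly the normal component of $v(p,t)$, so the decomposition is an identity, not an approximate one. Second, the commutation you flag as the ``main obstacle'' is not an obstacle at all: because the nodes $a_j(t)$ move with the exact velocity $v$ and $\mat_h\chi_j=0$, one has directly $\mat_h\widetilde I_h v=\sum_j\frac{\d}{\d t}v(a_j(t),t)\,\chi_j=\widetilde I_h(\mat v)$, and by induction $(\mat_h)^{(\ell)}\widetilde I_h v=\widetilde I_h\bigl((\mat)^{(\ell)}v\bigr)$; this is precisely what the paper uses (written there as $I_h\mat_h v$) without further comment.
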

\begin{proof}
We follow the steps of the original proofs from \cite{DziukElliott_L2} and \cite{LubichMansour_wave}.

(a) For $\ell=0$. Using the definition \eqref{eq: definition of discrete material velocity} and the fact $V_h = \widetilde I_h v$ we have
\begin{equation*}
 |v(p,t) - v_h(p,t)| = |\pr (v-I_h v)(p,t) + d(\wein I_h v(p,t) + \pa_t \nu)| \leq c h^{k+1} ,
\end{equation*}
where we used the interpolation estimates, Lemma~\ref{lemma: geometric est}, and the boundedness of the other terms.

For the gradient estimate we use the fact that $\nbg d = \nbg \mat_h d = 0$ and the geometric bounds of Lemma~\ref{lemma: geometric est}:
\begin{align*}
    |\nbg(v-v_h)| \leq &\ c|v-I_h v| + c|\nbg (v-I_h v)| \\
    &\ + |(\nbg d)(\wein I_h v + \pa_t \nu)| + |d(\nbg(\wein I_h v + \pa_t \nu))| \\
    \leq &\ c h^{k} .
\end{align*}

(b) For $\ell=1$, we use the transport property and again Lemma~\ref{lemma: geometric est}:
\begin{align*}
    |\mat_h(v-v_h)| \leq &\ |(\mat_h\pr) (v-I_h v)| + |\pr (\mat_hv-I_h \mat_hv)| \\
    &\ + |(\mat_hd)(\wein I_h v + \pa_t \nu)| + |d(\mat_h(\wein I_h v + \pa_t \nu))| \\
    \leq &\ c h^{k+1} .
\end{align*}

Again using $\nbg d = \nbg \mat_h d = 0$ and the geometric bounds of Lemma~\ref{lemma: geometric est}:
\begin{align*}
    |\nbg\mat_h(v-v_h)| \leq &\ c|v-I_h v| + c|\nbg (v-I_h v)| \\
    &\ + c|\mat_h (v-I_h v)| + c|\nbg (\mat_h v-I_h \mat_h v)| \\
    &\ + |(\nbg\mat_h d)(\wein I_h v + \pa_t \nu)| + |d(\nbg\mat_h(\wein I_h v + \pa_t \nu))| \\
    \leq &\ c h^{k} .
\end{align*}

(c) For $\ell >1$ the proof uses similar arguments.
\end{proof}

\subsection{Bilinear forms and their estimates}
\label{section: bilinear forms}

We use the time dependent bilinear forms defined in \cite{DziukElliott_L2}: For arbitrary $z,\vphi \in H^1(\Ga(t))$ and for their discrete analogs for $Z_h, \phi_h \in S_h(t)$
\begin{equation*}
    \begin{aligned}[c]
        m(t;z,\vphi)                &= \int_{\Ga(t)}\!\!\!\! z \vphi, \\
        a(t;z,\vphi)                &= \int_{\Ga(t)}\!\!\!\! \nbg z \cdot \nbg \vphi, \\
        g(t;v;z,\vphi)              &= \int_{\Ga(t)}\!\!\!\! (\nbg \cdot v) z\vphi, \\
        b(t;v;z,\vphi)              &= \int_{\Ga(t)}\!\!\!\! \Btensor(v) \nbg z \cdot \nbg \vphi,
    \end{aligned}
    \qquad
    \begin{aligned}[c]
        m_h(t;Z_h,\phi_h)         &= \int_{\Ga_h(t)}\!\!\!\! Z_h \phi_h, \\
        a_h(t;Z_h,\phi_h)         &= \int_{\Ga_h(t)}\!\!\!\! \nbgh Z_h \cdot \nbgh \phi_h, \\
        g_h(t;V_h;Z_h,\phi_h)     &= \int_{\Ga_h(t)}\!\!\!\! (\nbgh \cdot V_h) Z_h \phi_h, \\
        b_h(t;V_h;Z_h,\phi_h)     &= \int_{\Ga_h(t)}\!\!\!\! \Btensor_h(V_h) \nbgh Z_h \cdot \nbgh \phi_h,
    \end{aligned}
\end{equation*}
where the discrete tangential gradients are understood in a piecewise sense, and with the tensors given as
\begin{alignat*}{3}
    \Btensor(v)|_{ij} &= \delta_{ij} (\nbg \cdot v) - \big( (\nbg)_i v_j + (\nbg)_j v_i \big) , \\
    \Btensor_h(V_h)|_{ij} &= \delta_{ij} (\nbgh \cdot V_h) - \big( (\nbgh)_i (V_h)_j + (\nbgh)_j (V_h)_i \big),
\end{alignat*}
for $i,j=1,2,\dotsc,m+1$.
For more details see Lemma 2.1 in \cite{DziukElliott_L2} (and the references in the proof), or Lemma 5.2 in \cite{DziukElliott_acta}.

We will omit the time dependency of the bilinear forms if it is clear from the context.

\subsubsection{Discrete material derivative and transport properties}

The following transport equations, especially the one with discrete material derivatives on the continuous surface, are of great importance during the defect estimates later on.
\begin{lemma}[\cite{DziukElliott_L2} Lemma~4.2]
\label{lemma: transport prop}
    For any $z,\vphi, \mat_h z, \mat_h \vphi \in H^1(\Ga(t))$ we have
    \begin{align*}
        \diff m(z,\vphi) =&\ m(\mat_h z,\vphi) + m(z,\mat_h \vphi) + g(v_h;z,\vphi) , \\
        \diff a(z,\vphi) =&\ a(\mat_h z,\vphi) + a(z,\mat_h \vphi) + b(v_h;z,\vphi) .
    \end{align*}
    The same formulas hold when $\mat_h$ and $v_h$ are replaced with $\mat$ and $v$, respectively.

    Similarly, in the discrete case, for arbitrary $z_h, \phi_h, \mat_hz_h, \mat_h\phi_h \in S_h(t)$ we have
    \begin{align*}
        \diff m_h(z_h,\phi_h) =&\ m_h(\mat_h z_h,\phi_h) + m_h(z_h,\mat_h \phi_h) + g_h(V_h;z_h,\phi_h) , \\
        \diff a_h(z_h,\phi_h) =&\ a_h(\mat_h z_h,\phi_h) + a_h(z_h,\mat_h \phi_h) + b_h(V_h;z_h,\phi_h) .
    \end{align*}
\end{lemma}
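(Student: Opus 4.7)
The plan is to reduce both identities to the Reynolds (Leibniz) transport theorem on the evolving surface $\Gat$, combined with a commutator between the material derivative and the tangential gradient. The key preparatory observation is \eqref{eq: velocity difference is tangent}: since $v - v_h$ is tangential, the evolution of the smooth surface admits two admissible parametrisations with the same normal velocity, one driven by $v$ and one driven by $v_h$. Consequently, the Reynolds formula
\begin{equation*}
  \diff \int_{\Gat} f = \int_{\Gat} \bigl( \mat_h f + f\, \nbg \cdot v_h \bigr)
\end{equation*}
holds in exactly the same form as its $v, \mat$ counterpart, which reduces both claims to pointwise product- and commutator-identities.

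For the mass form I apply the transport identity to $f = z\vphi$ and use the Leibniz rule $\mat_h(z\vphi) = (\mat_h z)\vphi + z(\mat_h \vphi)$. The remaining term $\int_{\Gat} (\nbg \cdot v_h)\, z\vphi$ is, by definition, $g(v_h; z, \vphi)$, which gives the first identity. The discrete case on $\Ga_h(t)$ follows verbatim, using $V_h$ in place of $v_h$ and $\nbgh$ in place of $\nbg$, since $V_h$ is the velocity actually moving the triangulation.

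For the stiffness form I apply Reynolds to $f = \nbg z \cdot \nbg \vphi$. The essential ingredient is a pointwise commutator identity of the schematic form
\begin{equation*}
  \mat_h(\nbg u) = \nbg(\mat_h u) - (\nbg v_h)^{\top}\nbg u + N[u,v_h],
\end{equation*}
where $N[u,v_h]$ collects purely normal terms that are annihilated on contraction with any tangent vector. Pairing with $\nbg \vphi$, adding the analogous identity obtained by interchanging the roles of $z$ and $\vphi$, and combining with the $(\nbg \cdot v_h)(\nbg z \cdot \nbg \vphi)$ term produced by Reynolds leads to the integrand
\begin{equation*}
  \nbg(\mat_h z)\cdot\nbg \vphi + \nbg z\cdot \nbg(\mat_h\vphi) + \bigl[(\nbg\cdot v_h)\Id - (\nbg v_h) - (\nbg v_h)^{\top}\bigr] \nbg z \cdot \nbg \vphi .
\end{equation*}
By the definition of $\Btensor$ the matrix in brackets is precisely $\Btensor(v_h)$, and integration yields the claimed formula with $b(v_h; z, \vphi)$. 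The discrete counterpart is handled identically on $\Ga_h(t)$, with $V_h$, $\nbgh$ and $\Btensor_h$.

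The main obstacle is verifying the commutator identity for $\mat_h$ and $\nbg$. I would expand $\nbg = (\Id - \nu\otimes \nu)\nb$, apply $\mat_h u = \pa_t u + v_h \cdot \nb u$ and the chain rule, trading $\pa_t \nb u$ for $\nb \pa_t u$ and expanding $\nb(v_h\cdot \nb u)$ through the Hessian of $u$ and $\nb v_h$. The correction terms coming from differentiating the projector $\Id - \nu \otimes \nu$ along $v_h$ point in the normal direction and therefore disappear upon contraction with the tangential vectors $\nbg z$ and $\nbg \vphi$. After the symmetrisation only the $\Btensor(v_h)$ contribution survives, completing the argument; the replacements $\mat_h \to \mat, v_h \to v$ and the discrete analogue then require nothing more than substituting the underlying velocity field and geometric objects.
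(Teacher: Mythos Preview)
The paper does not supply its own proof of this lemma; it is quoted verbatim as Lemma~4.2 of \cite{DziukElliott_L2} and used as a black box. Your sketch reproduces exactly the standard argument behind that cited result: Reynolds transport for $\int_{\Gat} f$ applied to $f=z\vphi$ and $f=\nbg z\cdot\nbg\vphi$, together with the commutator between $\mat_h$ and $\nbg$, and the crucial use of \eqref{eq: velocity difference is tangent} to justify that Reynolds holds on $\Gat$ with the pair $(v_h,\mat_h)$ just as with $(v,\mat)$. The derivation of the $\Btensor(v_h)$ term from the symmetrised gradient of $v_h$ and the divergence contribution is correct, and the discrete statements on $\Ga_h(t)$ indeed follow by the same computation with $V_h$ as the advecting velocity (no tangentiality argument is needed there, since $V_h$ is by construction the velocity of $\Ga_h(t)$). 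So your proposal is correct and matches the approach of the source the paper cites.
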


\subsubsection{Geometric perturbation errors}

The following estimates are the most important technical results of this paper. Later on, they will play a crucial role in the defect estimates. We note here that these results extend the first order ESFEM theory of \cite{DziukElliott_L2} (for the first three inequalities) and \cite{LubichMansour_wave} (for the last inequality) to the higher order ESFEM case.

The high-order version of the inequalities for time independent bilinear forms $a(\cdot,\cdot)$ and $m(\cdot,\cdot)$ were shown in Lemma~6.2 in \cite{ElliottRanner}. The following results generalises these for the time dependent case.
\begin{lemma}
\label{lemma: geometric perturbation errors}
    For any $Z_h,\phi_h \in S_h^k(t)$, and for their lifts $Z_h^l,\phi_h^l \in H^1(\Ga(t))$, we have the following bounds
    \begin{align*}
        \big| m( Z_h^l,\phi_h^l) - m_h( Z_h,\phi_h) \big| \leq&\ c h^{k+1} \|Z_h^l\|_{L^2(\Ga(t))} \|\phi_h^l\|_{L^2(\Ga(t))} , \\
        \big| a( Z_h^l,\phi_h^l) - a_h( Z_h,\phi_h) \big| \leq&\ c h^{k+1} \|\nbg Z_h^l\|_{L^2(\Ga(t))} \|\nbg \phi_h^l\|_{L^2(\Ga(t))} , \\
        \big| g(v_h; Z_h^l,\phi_h^l) - g_h(V_h; Z_h,\phi_h) \big| \leq&\ c h^{k+1} \|Z_h^l\|_{L^2(\Ga(t))} \|\phi_h^l\|_{L^2(\Ga(t))} , \\
        \big| b(v_h; Z_h^l,\phi_h^l) - b_h(V_h; Z_h,\phi_h) \big| \leq&\ c h^{k+1} \|\nbg  Z_h^l\|_{L^2(\Ga(t))} \|\nbg \phi_h^l\|_{L^2(\Ga(t))} ,
    \end{align*}
    where the constants $c>0$ are independent of $h$ and $t$, but depend on $\GT$.
\end{lemma}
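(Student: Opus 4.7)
The plan is to prove all four estimates by transforming every $\Gah(t)$-integral into an integral on $\Gat$ via the lift and then controlling the resulting perturbation pointwise with the geometric bounds of Lemma~\ref{lemma: geometric est}. I would treat the four inequalities in succession, since each later one builds on the previous.

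First I would handle the mass perturbation $m - m_h$. The change of variables $\dd A = \delta_h \dd A_h$ gives $m_h(Z_h,\phi_h) = \int_{\Gat} \delta_h^{-1} Z_h^l \phi_h^l \,\dd A$, so
\begin{equation*}
m(Z_h^l,\phi_h^l) - m_h(Z_h,\phi_h) = \int_{\Gat} \bigl(1 - \delta_h^{-1}\bigr) Z_h^l \phi_h^l \,\dd A.
\end{equation*}
Since $\|1-\delta_h\|_{L^\infty} \leq c h^{k+1}$ we also have $\|1-\delta_h^{-1}\|_{L^\infty} \leq c h^{k+1}$, and Cauchy--Schwarz gives the bound. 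For $a - a_h$, I use the pointwise gradient transformation \eqref{eq: gradient transformation} (polarised to a bilinear form) together with the Jacobian factor $\delta_h^{-1}$ to obtain $a_h(Z_h,\phi_h) = \int_{\Gat} Q_h \nbg Z_h^l \cdot \nbg \phi_h^l \,\dd A$, so that
\begin{equation*}
a(Z_h^l,\phi_h^l) - a_h(Z_h,\phi_h) = \int_{\Gat} (\Id - Q_h) \nbg Z_h^l \cdot \nbg \phi_h^l \,\dd A.
\end{equation*}
Since $\nbg$ produces tangential vectors on $\Gat$, only $\pr(\Id - Q_h)\pr$ is relevant, and its $L^\infty$-norm is controlled by $c h^{k+1}$ by combining $\|\Id - \delta_h Q_h\|_{L^\infty}$ with $\|1-\delta_h\|_{L^\infty}$. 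Cauchy--Schwarz finishes this step.

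For $g - g_h$, rather than expanding the two surface divergences pointwise, I exploit Lemma~\ref{lemma: transport prop} applied to the mass bilinear forms, noting that $v - v_h$ is tangential to $\Gat$ by \eqref{eq: velocity difference is tangent} so that $v_h$ is a legitimate material velocity on $\Gat$. Subtracting the discrete and continuous transport identities yields
\begin{equation*}
g(v_h;Z_h^l,\phi_h^l) - g_h(V_h;Z_h,\phi_h) = \tfrac{\dd}{\dd t}\bigl[m(Z_h^l,\phi_h^l) - m_h(Z_h,\phi_h)\bigr] - \bigl(m-m_h\bigr)(\mat_h Z_h^l,\phi_h^l) - \bigl(m-m_h\bigr)(Z_h^l,\mat_h \phi_h^l).
\end{equation*}
Computing the first term by differentiating the representation above through the transport formula on $\Gat$ with velocity $v_h$, the $\mat_h Z_h^l$- and $\mat_h \phi_h^l$-terms cancel exactly against the last two, leaving
\begin{equation*}
g(v_h;Z_h^l,\phi_h^l) - g_h(V_h;Z_h,\phi_h) = \int_{\Gat} \Bigl[\delta_h^{-2}\mat_h \delta_h + (1-\delta_h^{-1})\,\nbg \cdot v_h\Bigr] Z_h^l \phi_h^l \,\dd A,
\end{equation*}
whose bracket is of size $c h^{k+1}$ in $L^\infty$ by the bounds on $\|\mat_h \delta_h\|_{L^\infty}$ and $\|1-\delta_h\|_{L^\infty}$ in Lemma~\ref{lemma: geometric est}. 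The same scheme yields the $b - b_h$ bound starting from the $a - a_h$ representation: differentiating $\int_{\Gat}(\Id - Q_h)\nbg Z_h^l \cdot \nbg \phi_h^l \dd A$ via the transport formula and cancelling $(a - a_h)(\mat_h \cdot,\cdot)$-type terms leaves a remainder whose integrand is tensor-valued and built from $\pr(\mat_h Q_h)\pr$, from $(\Id - Q_h)(\nbg \cdot v_h)$, and from the commutator of $\mat_h$ with $\nbg$ on $\Gat$. Each such coefficient is of size $c h^{k+1}$ in $L^\infty$ by Lemma~\ref{lemma: geometric est}, and Cauchy--Schwarz yields the claim.

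The main obstacle is the last bound: handling $b - b_h$ cleanly requires manipulating the commutator of $\mat_h$ with $\nbg$ on the lifted basis functions, and the projected bound $\|\pr(\mat_h Q_h)\pr\|_{L^\infty} \leq c h^{k+1}$ from Lemma~\ref{lemma: geometric est} is exactly what is needed so that the right-hand side ends up in the form $\|\nbg Z_h^l\|_{L^2}\|\nbg \phi_h^l\|_{L^2}$ rather than picking up extraneous factors $\|\nbg \mat_h Z_h^l\|_{L^2}$. This precisely motivates the structure of Lemma~\ref{lemma: geometric est}.
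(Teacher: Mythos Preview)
Your proposal is correct and follows essentially the same strategy as the paper. For the $m$ and $a$ estimates you reproduce the paper's argument verbatim (the paper writes the kernel as $\Id - \delta_h Q_h$ rather than your $\Id - Q_h$, but this is only a normalisation difference and both are $O(h^{k+1})$ in $L^\infty$). For the $g$ and $b$ estimates both you and the paper obtain the bounds by differentiating the already--established $m$ and $a$ identities in time and cancelling the $\mat_h$--terms via the transport formulas of Lemma~\ref{lemma: transport prop}.

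The only organisational difference is that the paper differentiates the \emph{exact} identities $m(Z_h^l,\phi_h^l)=m_h(Z_h,\phi_h\delta_h)$ and $a_h(Z_h,\phi_h)=\int_{\Gat} Q_h^l\nbg Z_h^l\cdot\nbg\phi_h^l$, whereas you differentiate the \emph{difference} representations $m-m_h=\int_{\Gat}(1-\delta_h^{-1})Z_h^l\phi_h^l$ and $a-a_h=\int_{\Gat}(\Id-Q_h)\nbg Z_h^l\cdot\nbg\phi_h^l$. These are algebraically equivalent. One practical note: in the $b$--estimate the paper's route makes the cancellation of the $\nbg\mat_h Z_h^l$--terms automatic (they are simply $a_h(\mat_h Z_h,\phi_h)$ on both sides), so no separate ``commutator of $\mat_h$ with $\nbg$'' needs to be tracked; the tensor $\Btensor(v_h)$ already encodes that commutator, and the remainder is just $\int_{\Gat}(\mat_h Q_h^l)\nbg Z_h^l\cdot\nbg\phi_h^l + \int_{\Gat}\Btensor(v_h)(Q_h^l-\Id)\nbg Z_h^l\cdot\nbg\phi_h^l$. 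Your sketch arrives at the same place, but you may find the paper's bookkeeping slightly cleaner when you write out the details.
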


\begin{proof}
The proof of the first two estimates is a high-order generalization of Lemma~5.5 in \cite{DziukElliott_L2}, 
while the proof of the last two estimates is a high-order extension of Lemma~7.5 in \cite{LubichMansour_wave}. Their proofs follow these references. Both parts are using the geometric estimates showed in Lemma~\ref{lemma: geometric est}.

To show the first inequality we estimate as
\begin{align*}
    \big| m( Z_h^l,\phi_h^l) - m_h( Z_h,\phi_h) \big| =&\ \Big| \int_{\Ga(t)} \!\!\!\! Z_h^l \phi_h^l \d A - \int_{\Ga_h(t)} \!\!\!\! Z_h \phi_h \d A_h \Big| \\
    = &\ \Big| \int_{\Ga(t)} (1 - \delta_h\inv) Z_h^l \phi_h^l \d A \Big| \\
    \leq &\ c h^{k+1} \|Z_h^l\|_{L^2(\Ga(t))} \|\phi_h^l\|_{L^2(\Ga(t))} .
\end{align*}

For the second inequality, similarly we have
\begin{align*}
    \big| a( Z_h^l,\phi_h^l) - a_h( Z_h,\phi_h) \big| =&\  \Big| \int_{\Ga(t)} \!\!\!\!\! \nbg Z_h^l \cdot \nbg \phi_h^l \d A - \int_{\Ga_h(t)} \!\!\!\!\!  \nbgh Z_h \cdot \nbgh \phi_h \d A_h \Big| \\
   = &\ \Big| \int_{\Ga(t)} (\Id - \delta_h Q_h) \nbg Z_h^l \cdot \nbg \phi_h^l \d A \Big| \\
    \leq &\ c h^{k+1} \|\nbg Z_h^l\|_{L^2(\Ga(t))} \|\nbg \phi_h^l\|_{L^2(\Ga(t))} .
\end{align*}

For the third estimate we start by taking the time derivative of the equality $m(Z_h^l,\phi_h^l) = m_h(Z_h,\phi_h \delta_h)$, using the first transport property from Lemma~\ref{lemma: transport prop} to obtain
\begin{align*}
    \diff m(Z_h^l,\phi_h^l) =&\ m(\mat_h Z_h^l,\phi_h^l) + m(Z_h^l,\mat_h \phi_h^l) \\
    &\ + g(v_h;Z_h^l,\phi_h^l)\\
    =\diff m_h(Z_h,\phi_h \delta_h) =&\ m_h(\mat_h Z_h,\phi_h \delta_h) + m_h(Z_h,(\mat_h \phi_h)\delta_h) \\
    &\ + g_h(V_h;Z_h,\phi_h\delta_h) + m_h(Z_h,(\mat_h \delta_h)\phi_h) .
\end{align*}
Hence, using $\mat_h w_h^l = (\mat_h w_h)^l$,
\begin{align*}
    g(v_h;Z_h^l,\phi_h^l) - g_h(V_h;Z_h,\phi_h\delta_h) =&\ m((\mat_h Z_h)^l,\phi_h^l) - m_h(\mat_h Z_h,\phi_h \delta_h) \\
    &\ + m(Z_h^l,(\mat_h \phi_h)^l) - m_h(Z_h,(\mat_h \phi_h)\delta_h) \\
    &\ + m_h(Z_h,(\mat_h \delta_h)\phi_h) \\
    =&\ m_h(Z_h,(\mat_h \delta_h)\phi_h) .
\end{align*}
Hence, together with Lemma \ref{lemma: geometric est}, the bound
\begin{align*}
    &\ |g(v_h;Z_h^l,\phi_h^l) - g_h(V_h;Z_h,\phi_h)| \\
    \leq &\ |g_h(V_h;Z_h,\phi_h(\delta_h-1))| + |m_h(Z_h,(\mat_h \delta_h)\phi_h)| \\
    \leq &\ c \Big( \|\mat_h \delta_h\|_{L^\infty(\Ga_h(t))} + \|1-\delta_h\|_{L^\infty(\Ga_h(t))} \Big) \|Z_h^l\|_{L^2(\Ga(t))} \|\phi_h^l\|_{L^2(\Ga(t))}
\end{align*}
finishes the proof of the third estimate.

For the last inequality we take a similar approach, using the second transport property from Lemma~\ref{lemma: transport prop} and the relation \eqref{eq: gradient transformation} to obtain
\begin{align*}
    &\ \diff a_h(Z_h,\phi_h) = a_h(\mat_h Z_h,\phi_h) + a_h(Z_h,\mat_h \phi_h) + b_h(V_h;Z_h,\phi_h)\\
    =&\ \diff \int_{\Gat} Q_h^l\nbg Z_h^l \cdot \nbg \phi_h^l = \int_{\Gat} Q_h^l\nbg \mat_h Z_h^l \cdot \nbg \phi_h^l + \int_{\Gat} Q_h^l\nbg Z_h^l \cdot \nbg \mat_h \phi_h^l \\
    &\ \hphantom{\diff \int_{\Gat} Q_h^l\nbg Z_h^l \cdot \nbg \phi_h^l = } + \int_{\Gat} (\mat_hQ_h^l) \nbg Z_h^l \cdot \nbg \phi_h^l + \int_{\Gat} \Btensor(v_h) Q_h^l\nbg Z_h^l \cdot \nbg \phi_h^l .
\end{align*}
Again, using $\mat_h w_h^l = (\mat_h w_h)^l$, together with \eqref{eq: gradient transformation} and the geometric estimates of Lemma~\ref{lemma: geometric est} we obtain
\begin{align*}
    |b_h(V_h;Z_h,\phi_h) - b(v_h;Z_h^l,\phi_h^l)| = &\ \Big|\int_{\Gat} (\mat_hQ_h^l) \nbg Z_h^l \cdot \nbg \phi_h^l \\
    &\ + \int_{\Gat} \Btensor(v_h) \big(Q_h^l - \Id \big)\nbg Z_h^l \cdot \nbg \phi_h^l \Big| \\
    \leq &\ ch^{k+1} \|\nbg  Z_h^l\|_{L^2(\Ga(t))} \|\nbg \phi_h^l\|_{L^2(\Ga(t))},
\end{align*}
completing the proof.
\end{proof}

\section{Generalized Ritz map and higher order error bounds}
\label{section: Ritz}

We recall the generalised Ritz map for evolving surface PDEs from \cite{LubichMansour_wave}.

\begin{definition}[Ritz map]
\label{def: Ritz}
    For any given $z\in H^{1}(\Gat)$ there is a
    unique $\Pt z\in S_h^k\t$ \st\ for all $\phi_h\in S_h^k\t$, with the corresponding lift
    $\vphi_h=\phi_h^l$, we have
    \begin{equation}\label{eq: Ritz definition}
        a_h^{\ast}(\Pt z,\phi_h) = a^\ast(z,\vphi_h),
    \end{equation}
    where we let $a^{\ast}=a+m$ and $a_h^{\ast}=a_h+m_h$, to make the forms $a$ and $a_h$ positive
    definite. Then $\P z \in (S_h^k\t)^l$ is defined as the lift of $\Pt z$, i.e.\ $\P z = (\Pt z)^l$.
\end{definition}
We note here that originally in Definition~8.1 in \cite{LubichMansour_wave} an extra term appeared involving $\mat z$ and the surface velocity, which is not needed for the parabolic case. The Ritz map above is still well-defined.

Galerkin orthogonality does not hold in this case, just up to a small defect.
\begin{lemma}[Galerkin orthogonality up to a small defect]
\label{lemma: galerkin orthogonality}
    For any $z\in H^{1}(\Gat)$ and $\varphi_{h}\in (S_h^k\t)^l$
    \begin{equation}\label{eq: galerkin orthogonality}
        \abs{\aast(z-\P z,\varphi_{h}) } \leq c h^{k+1} \| \P z \|_{H^1(\Gat)} \|\vphi_h\|_{H^1(\Gat)},
    \end{equation}
    where $c$ is independent of $\xi$,\ $h$ and $t$.
\end{lemma}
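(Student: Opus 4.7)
The plan is to reduce this to a direct application of the geometric perturbation estimates from Lemma~\ref{lemma: geometric perturbation errors}. Since $\varphi_h \in (S_h^k(t))^l$, there is a unique $\phi_h \in S_h^k(t)$ with $\varphi_h = \phi_h^l$, and the defining relation \eqref{eq: Ritz definition} of the Ritz map gives
\begin{equation*}
    a^{\ast}(z,\varphi_h) = a_h^{\ast}(\Pt z,\phi_h).
\end{equation*}
Subtracting $a^{\ast}(\P z,\varphi_h)$ from both sides and using $\P z = (\Pt z)^l$, I would therefore write
\begin{equation*}
    a^{\ast}(z - \P z,\varphi_h) = a_h^{\ast}(\Pt z,\phi_h) - a^{\ast}((\Pt z)^l,\phi_h^l).
\end{equation*}
This identity is the key observation: the left-hand side, which would vanish if we had exact Galerkin orthogonality, equals precisely the geometric mismatch between the discrete and continuous versions of $a^{\ast}$ evaluated on $\Pt z$ and $\phi_h$.

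Next I would split $a^{\ast} = a + m$ and $a_h^{\ast} = a_h + m_h$, yielding
\begin{equation*}
    a^{\ast}(z - \P z,\varphi_h) = \bigl(a_h(\Pt z,\phi_h) - a((\Pt z)^l,\phi_h^l)\bigr) + \bigl(m_h(\Pt z,\phi_h) - m((\Pt z)^l,\phi_h^l)\bigr),
\end{equation*}
and apply the first two bounds of Lemma~\ref{lemma: geometric perturbation errors} (with $Z_h = \Pt z$). This gives
\begin{equation*}
    |a^{\ast}(z - \P z,\varphi_h)| \leq c h^{k+1} \|\nbg \P z\|_{L^2(\Gat)} \|\nbg \varphi_h\|_{L^2(\Gat)} + c h^{k+1} \|\P z\|_{L^2(\Gat)} \|\varphi_h\|_{L^2(\Gat)},
\end{equation*}
and absorbing both terms into the $H^1$ norms yields the claimed bound $c h^{k+1} \|\P z\|_{H^1(\Gat)} \|\varphi_h\|_{H^1(\Gat)}$.

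There is essentially no obstacle here, since the hard technical work has already been done in Lemma~\ref{lemma: geometric perturbation errors}; the only point that needs care is the book-keeping in relating $a^{\ast}$ on the continuous side to $a_h^{\ast}$ on the discrete side via the lift and using the definition of $\Pt$ to eliminate $z$ itself from the right-hand side of the estimate (replacing it with its Ritz image $\P z$). This is also why the bound scales with $\|\P z\|_{H^1(\Gat)}$ rather than with $\|z\|_{H^1(\Gat)}$: the perturbation estimates of Lemma~\ref{lemma: geometric perturbation errors} only see the finite element functions $\Pt z$ and $\phi_h$, not $z$ itself.
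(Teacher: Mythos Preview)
Your proposal is correct and follows essentially the same argument as the paper: use the definition of the Ritz map to rewrite $a^\ast(z-\P z,\varphi_h)$ as $a_h^\ast(\Pt z,\phi_h)-a^\ast(\P z,\varphi_h)$, then apply the geometric perturbation bounds of Lemma~\ref{lemma: geometric perturbation errors}. The paper compresses this into a single line, while you have spelled out the splitting $a^\ast=a+m$, $a_h^\ast=a_h+m_h$ and the role of the lift explicitly.
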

\begin{proof}
Using the definition of the Ritz map and  Lemma~\ref{lemma: geometric perturbation errors} we estimate by
\begin{align*}
    \abs{\aast(z-\P z, \vphi_h)} =&\ \abs{\aast_h(\Pt z,  \phi_h) - \aast(\P z, \vphi_h)}
    \leq   c h^{k+1} \| \P z \|_{H^1(\Gat)} \|\vphi_h\|_{H^1(\Gat)} .
\end{align*}
\end{proof}

\subsection{Errors in the Ritz map}
Now we prove higher order error estimates for the Ritz map \eqref{eq: Ritz definition} and also for its material derivatives, the analogous results for the first order ESFEM case can be found in Section~6 of \cite{DziukElliott_L2}, or Section~7 of \cite{diss_Mansour}.

\subsubsection{Error bounds for the Ritz map}

\begin{theorem}
\label{theorem: Ritz error}
    Let $z:\GT \to \R$ with $z\in H^{k+1}(\Gat)$ for every $0 \leq t \leq T$. Then the error in the Ritz map satisfies the bound, for $0 \leq t \leq T$ and $h \leq h_0$ with sufficiently small $h_0$,
    \begin{equation*}
        \|z-\P z\|_{L^2(\Gat)} + h \|z-\P z\|_{H^1(\Gat)} \leq c h^{k+1} \|z\|_{H^{k+1}(\Gat)} ,
    \end{equation*}
    where the constant $c>0$ is independent of $h$ and $t$.
\end{theorem}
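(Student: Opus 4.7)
The plan is to prove the $H^1$ estimate first via a Céa–type argument using the interpolant, and then upgrade to the $L^2$ estimate by an Aubin--Nitsche duality argument, both powered by the Galerkin-orthogonality-with-defect from Lemma~\ref{lemma: galerkin orthogonality}. Throughout I would exploit the symmetry of $\aast$ at each fixed time $t$ (both $m$ and $a$ are symmetric) and the coercivity $\aast(\vphi,\vphi)\geq c\|\vphi\|_{H^1(\Gat)}^2$ that justifies the name of $\aast$.

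First I would establish stability of the Ritz map, namely $\|\P z\|_{H^1(\Gat)} \leq c\|z\|_{H^1(\Gat)}$, by testing the definition \eqref{eq: Ritz definition} with $\phi_h = \Pt z$, applying norm equivalence (Lemma~\ref{lemma: equivalence of discrete norms}) to pass from $\aast_h$ to $\aast$, and using Cauchy--Schwarz. Next, for the $H^1$ bound I would split $z-\P z = (z-\ipol z) + (\ipol z - \P z)$ and estimate the first piece by the interpolation bound in Lemma~\ref{lemma: interpolation error}, which gives $\|z-\ipol z\|_{H^1(\Gat)}\leq c h^k \|z\|_{H^{k+1}(\Gat)}$. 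For the second piece, set $e_h := \ipol z - \P z \in (S_h^k\t)^l$ and write
\begin{equation*}
c\|e_h\|_{H^1(\Gat)}^2 \leq \aast(e_h,e_h) = \aast(\ipol z - z, e_h) + \aast(z - \P z, e_h).
\end{equation*}
The first term on the right is $\bigo(h^k\|z\|_{H^{k+1}(\Gat)})\|e_h\|_{H^1(\Gat)}$ by interpolation, and the second is $\bigo(h^{k+1}\|\P z\|_{H^1(\Gat)})\|e_h\|_{H^1(\Gat)}$ by Lemma~\ref{lemma: galerkin orthogonality}; combining with the stability bound and the triangle inequality yields $\|z-\P z\|_{H^1(\Gat)}\leq ch^k\|z\|_{H^{k+1}(\Gat)}$.

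For the $L^2$ estimate I would run the Aubin--Nitsche trick: let $e = z-\P z$ and take $w\in H^2(\Gat)$ to be the solution of the dual problem $\aast(\vphi,w) = m(e,\vphi)$ for all $\vphi \in H^1(\Gat)$, which enjoys the elliptic regularity bound $\|w\|_{H^2(\Gat)}\leq c\|e\|_{L^2(\Gat)}$ on the smooth closed surface $\Gat$. Using symmetry of $\aast$, write
\begin{equation*}
\|e\|_{L^2(\Gat)}^2 = m(e,e) = \aast(e,w) = \aast(e, w - \ipol w) + \aast(e, \ipol w).
\end{equation*}
The first term is bounded by $c\|e\|_{H^1(\Gat)}\|w-\ipol w\|_{H^1(\Gat)} \leq c h^k\|z\|_{H^{k+1}(\Gat)}\cdot h\|w\|_{H^2(\Gat)}$ by the already proved $H^1$ bound and Lemma~\ref{lemma: interpolation error}. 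The second term is handled by Lemma~\ref{lemma: galerkin orthogonality} (which applies since $\ipol w \in (S_h^k\t)^l$), giving $ch^{k+1}\|\P z\|_{H^1(\Gat)}\|\ipol w\|_{H^1(\Gat)} \leq ch^{k+1}\|z\|_{H^1(\Gat)}\|w\|_{H^2(\Gat)}$. Absorbing $\|w\|_{H^2(\Gat)}\leq c\|e\|_{L^2(\Gat)}$ and dividing through gives the claimed $h^{k+1}$ order.

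The main obstacle, and the reason this argument works at the \emph{high-order} level rather than just first order, is entirely concentrated in Lemma~\ref{lemma: galerkin orthogonality}: the defect between $\aast_h(\Pt z,\phi_h)$ and $\aast(\P z,\vphi_h)$ must be of order $h^{k+1}$ (not $h^2$), and this is exactly what the sharp geometric perturbation estimates of Lemma~\ref{lemma: geometric perturbation errors} provide. Provided this $h^{k+1}$ defect is in hand, the rest is a fairly standard Céa plus Aubin--Nitsche argument, with the only subtlety being the consistent use of norm equivalence between the discrete and lifted settings so that estimates on $\Ga_h\t$ can be interpreted on $\Gat$.
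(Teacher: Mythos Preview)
Your proposal is correct and follows essentially the same strategy as the paper: a C\'ea-type argument for the $H^1$ bound and an Aubin--Nitsche duality for the $L^2$ bound, both driven by the Galerkin-orthogonality-with-defect of Lemma~\ref{lemma: galerkin orthogonality}. Your explicit stability step $\|\P z\|_{H^1}\le c\|z\|_{H^1}$ is a clean alternative to the paper's absorption argument, and your decomposition via $e_h=\ipol z-\P z$ is an equivalent rearrangement of the paper's direct estimate of $\aast(z-\P z,z-\P z)$.

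One small point deserves care. In the duality step you insert the \emph{high-order} interpolant $\ipol w$ and claim $\|w-\ipol w\|_{H^1(\Gat)}\le ch\|w\|_{H^2(\Gat)}$. This is true (standard Bramble--Hilbert reasoning for degree-$k$ Lagrange interpolation of an $H^2$ function), but it is not what Lemma~\ref{lemma: interpolation error} actually states: that lemma assumes $w\in H^{k+1}(\Gat)$, whereas elliptic regularity only gives $w\in H^2(\Gat)$. The paper handles this by introducing, precisely for this purpose, the \emph{linear} interpolant $I_h^{(1)}$ on the high-order surface (second part of Lemma~\ref{lemma: interpolation error}) and inserting $I_h^{(1)} w$ rather than $\ipol w$ in the Aubin--Nitsche split. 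Either route works; you should just make sure to cite an interpolation estimate that applies under $H^2$ regularity only, or switch to $I_h^{(1)}$ as the paper does.
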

\begin{proof}
To ease the presentation we suppress all time arguments $t$ appearing in the norms within the proof (except some special occasions).

(a) We first prove the gradient estimate.
Starting by the definition of the $H^1(\Ga\t)$ norm, then using the estimate \eqref{eq: galerkin orthogonality} we have
\begin{align*}
    \|z-\P z\|_{H^1(\Ga)}^2 = &\ \aast(z-\P z, z-\P z) \\
    =&\ \aast(z-\P z, z-I_h z) + \aast(z-\P z, I_h z-\P z) \\
    \leq&\  \|z-\P z\|_{H^1(\Ga)} \|z-I_h z\|_{H^1(\Ga)}  + c h^{k+1} \| \P z \|_{H^1(\Ga)} \|I_h z-\P z\|_{H^1(\Ga)} \\
    \leq&\  c h^k \|z-\P z\|_{H^1(\Ga)} \|z\|_{H^{k+1}(\Ga)} \\
    &\ + c h^{k+1} \Big( \! 2\|z-\P z\|_{H^1(\Ga)}^2 \! + \! \|z\|_{H^1(\Ga)}^2 \! + \! ch^{2k}\|z\|_{H^{k+1}(\Ga)}^2 \! \Big),
\end{align*}
using interpolation error estimate, and for the second term we used the estimate
\begin{align*}
    &\ \| \P z \|_{H^1(\Ga)} \|I_h z-\P z\|_{H^1(\Ga)} \\
    \leq &\ \Big(\| \P z -z\|_{H^1(\Ga)} + \|z\|_{H^1(\Ga)} \Big) \Big(\|I_h z-z\|_{H^1(\Ga)} + \|z-\P z\|_{H^1(\Ga)}\Big)\\
    \leq &\ 2\|z-\P z\|_{H^1(\Ga)}^2 + \|z\|_{H^{k+1}(\Ga)}^2 + ch^{2k}\|z\|_{H^{k+1}(\Ga)}^2.
\end{align*}
Now using Young's inequality, and for a sufficiently small (but $t$ independent) $h\leq h_0$ we have the gradient estimate
\begin{equation*}
    \|z-\P z\|_{H^1(\Ga\t)} \leq  c h^k \|z\|_{H^{k+1}(\Ga\t)}.
\end{equation*}

(b) The $L^2$-estimate follows from the Aubin--Nitsche trick. Let us consider the problem
\begin{equation*}
    -\laplace_{\Ga\t} w + w = z-\P z \qquad \textrm{on}\quad  \Ga\t,
\end{equation*}
then by the usual elliptic theory (see, e.g.\ Section 3.1 of \cite{DziukElliott_acta}, or \cite{Aubinbook})
yields: the solution $w\in H^2(\Ga\t)$ satisfies the bound, with a $c>0$ independent of $t$,
\begin{equation*}
    \|w\|_{H^2(\Ga\t)} \leq c \|z-\P z\|_{L^2(\Ga\t)} .
\end{equation*}

By testing the elliptic weak problem with $z-\P z$, using \eqref{eq: galerkin orthogonality} again, and using the linear finite element interpolation $I_h^{(1)}$ on $\Ga_h^k\t$, we obtain
\begin{align*}
    \|z-\P z\|^2_{L^2(\Ga)} =&\ \aast(z-\P z,w) \\
    =&\ \aast(z-\P z,w - I_h^{(1)} w) + \aast(z-\P z,I_h^{(1)} w) \\
    \leq&\  \|z-\P z\|_{H^1(\Ga)} \|w - I_h^{(1)} w\|_{H^1(\Ga)}  + ch^{k+1} \| \P z \|_{H^1(\Ga)} \|I_h^{(1)} w\|_{H^1(\Ga)} \\
    \leq &\  ch^k\|z\|_{H^{k+1}(\Ga)} ch\|w\|_{H^2(\Ga)}  + ch^{k+1} \| \P z \|_{H^1(\Ga)} \|I_h^{(1)} w\|_{H^1(\Ga)} .
\end{align*}
Where for the second term we now used
\begin{align*}
    \|\P z\|_{H^1(\Ga)} \|I_h^{(1)} w\|_{H^1(\Ga)}
    \leq &\ (\|z-\P z\|_{H^1(\Ga)} + \|z\|_{H^1(\Ga)} )(\|w-I_h^{(1)} w\|_{H^1(\Ga)} + \|w\|_{H^1(\Ga)}) \\
    \leq &\ (1+ch^k)\|z\|_{H^{k+1}(\Ga)}(1+ch)\|w\|_{H^2(\Ga)}.
\end{align*}

Then the combination of the gradient estimate for the Ritz map and the interpolation error yields
\begin{equation*}
    \|z-\P z\|_{L^2(\Ga\t)} \frac{1}{c} \|w\|_{H^2(\Ga\t)} \leq \|z-\P z\|_{L^2(\Ga\t)}^{2} \leq  c h^{k+1} \|z\|_{H^{k+1}(\Ga\t)} \|w\|_{H^2(\Ga\t)},
\end{equation*}
which completes the proof.
\end{proof}

\subsubsection{Error bounds for the material derivatives of the Ritz map}

Since, in general $\mat_h \P z = \P \mat_h z$ does not hold, we need the following result.

\begin{theorem}
\label{theorem: Ritz mat error}
    The error in the material derivatives of the Ritz map, for any $\ell\in\N$, satisfies the following bounds, for $0 \leq t \leq T$ and $h \leq h_0$ with sufficiently small $h_0>0$
    \begin{align*}
        \|(\mat_h)^{(\ell)}(z-\P z)\|_{L^2(\Gat)} + h &\|\nbg(\mat_h)^{(\ell)}(z-\P z)\|_{L^2(\Gat)}  \leq  c_{\ell} h^{k+1} \sum_{j=0}^{\ell} \|(\mat)^{(j)}z\|_{H^{k+1}(\Gat)} ,
    \end{align*}
    where the constant $c_\ell>0$ is independent of $h$ and $t$.
\end{theorem}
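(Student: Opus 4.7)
The plan is to proceed by induction on $\ell$, with the base case $\ell=0$ being Theorem~\ref{theorem: Ritz error}. For the inductive step, assuming the bound for all indices $0,1,\dotsc,\ell-1$, I would differentiate the Ritz-defining identity $\aast_h(\Pt z,\phi_h) = \aast(z,\vphi_h)$ exactly $\ell$ times in $t$, applying the transport identities of Lemma~\ref{lemma: transport prop} on both sides. A key observation at each step is that Ritz can be reapplied to absorb terms of the form $\aast_h(\Pt z,(\mat_h)^{(j)}\phi_h) = \aast(z,(\mat_h)^{(j)}\vphi_h)$ (since $(\mat_h)^{(j)}\phi_h \in S_h^k(t)$ lifts to $(\mat_h)^{(j)}\vphi_h$). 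After this cancellation, the result takes the form
\begin{equation*}
    \aast_h\big((\mat_h)^{(\ell)}\Pt z - \Pt (\mat_h)^{(\ell)} z,\phi_h\big) = R_\ell(\phi_h),
\end{equation*}
where $R_\ell(\phi_h)$ is a finite sum of pieces of three kinds: (i) geometric-perturbation differences $b(v_h;\cdot,\cdot)-b_h(V_h;\cdot,\cdot)$ and $g(v_h;\cdot,\cdot)-g_h(V_h;\cdot,\cdot)$ and their iterated material derivatives applied to lower-order material derivatives of $\P z$, (ii) terms that compare $(\mat_h)^{(j)} z$ with $(\mat)^{(j)} z$ for $j<\ell$, arising because $v_h - v$ is tangential, and (iii) terms involving material derivatives of $v-v_h$, $\delta_h$ and $Q_h$.

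Each piece of $R_\ell(\phi_h)$ is estimated directly: the type (i) terms by Lemma~\ref{lemma: geometric perturbation errors} and its differentiated analogues (for which one redoes the short derivation of that lemma after differentiating the identity $m(Z_h^l,\phi_h^l)=m_h(Z_h,\phi_h\delta_h)$ further in time, using the bounds on $(\mat_h)^{(\ell)}\delta_h$ and $(\mat_h)^{(\ell)}Q_h$ from Lemma~\ref{lemma: geometric est}); the type (ii) terms by pulling out $v-v_h$ in $L^\infty$ through Lemma~\ref{lemma: velocity estimate} and using the inductive hypothesis; and the type (iii) terms similarly. This yields
\begin{equation*}
    |R_\ell(\phi_h)| \leq c\, h^{k+1} \Big(\sum_{j=0}^{\ell} \|(\mat)^{(j)} z\|_{H^{k+1}(\Gat)}\Big)\,\|\phi_h\|_{H^1(\Ga_h(t))}.
\end{equation*}

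Testing with $\phi_h = (\mat_h)^{(\ell)}\Pt z - \Pt (\mat_h)^{(\ell)} z$ and using coercivity of $\aast_h$ gives an $H^1$-estimate of order $h^{k+1}$ for this difference, and a standard Aubin--Nitsche argument — entirely parallel to part (b) of the proof of Theorem~\ref{theorem: Ritz error}, with the linear interpolant $I_h^{(1)}$ on the $k$-order surface and \eqref{eq: galerkin orthogonality} replaced by the $\ell$-differentiated variant just derived — upgrades this to an $O(h^{k+2})$ bound in $L^2$. Finally, the identity
\begin{equation*}
    (\mat_h)^{(\ell)}(z-\P z) = \big((\mat_h)^{(\ell)} z - \P (\mat_h)^{(\ell)} z\big) + \big(\Pt (\mat_h)^{(\ell)} z - (\mat_h)^{(\ell)}\Pt z\big)^l
\end{equation*}
together with Theorem~\ref{theorem: Ritz error} applied to $(\mat_h)^{(\ell)} z$ (and Lemma~\ref{lemma: velocity estimate} to convert between $(\mat_h)^{(j)} z$ and $(\mat)^{(j)} z$ on the right-hand side of the bound) yields the asserted estimate.

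The main obstacle is purely organizational: tracking the explicit form of $R_\ell(\phi_h)$ after $\ell$-fold differentiation. Every differentiation of a bilinear form spawns a $b$- or $g$-type contribution plus a test-function derivative, and every differentiation of $v_h$, $\delta_h$, $Q_h$ or the Ritz map introduces a new object that must be controlled either in $L^\infty$ (through Lemmas~\ref{lemma: geometric est} and \ref{lemma: velocity estimate}) or in $H^1$ (through the inductive hypothesis). The care needed is to ensure that each difference of bilinear forms ultimately lands on a pair to which an iterated version of Lemma~\ref{lemma: geometric perturbation errors} applies, so that the $h^{k+1}$ geometric factor is extracted uniformly in $t$; once this accounting is done the remaining argument is a straightforward energy plus duality estimate.
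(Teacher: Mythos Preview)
Your decomposition via the commutator $(\mat_h)^{(\ell)}\Pt z - \Pt(\mat_h)^{(\ell)} z$ is a legitimate organizational device, but the bound $|R_\ell(\phi_h)| \leq c\,h^{k+1}\|\phi_h\|_{H^1}$ is false, and this breaks the $L^2$ part of the argument. Differentiating the Ritz identity once and cancelling as you describe leaves
\[
  \aast_h\big(\mat_h\Pt z - \Pt\mat_h z,\phi_h\big)
  = \big[b(v_h;z,\vphi_h)-b_h(V_h;\Pt z,\phi_h)\big] + \big[g(v_h;z,\vphi_h)-g_h(V_h;\Pt z,\phi_h)\big],
\]
and the first bracket splits as $b(v_h;z-\P z,\vphi_h) + \big[b(v_h;\P z,\vphi_h)-b_h(V_h;\Pt z,\phi_h)\big]$. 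Only the second piece is a geometric perturbation term of order $h^{k+1}$; the first is bounded by $c\|\nbg(z-\P z)\|_{L^2}\|\nbg\vphi_h\|_{L^2} \leq c h^{k}\|\vphi_h\|_{H^1}$ and no better. This term does not fit any of your categories (i)--(iii). The same loss occurs for every $b$-type contribution at higher $\ell$.

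With only $|R_\ell(\phi_h)|\leq c h^{k}\|\phi_h\|_{H^1}$, the $H^1$ bound on the commutator is still $O(h^k)$, which is exactly what the theorem needs, so that part survives. But a ``standard Aubin--Nitsche argument parallel to Theorem~\ref{theorem: Ritz error}(b)'' now only yields $\|e\|_{L^2}\leq c h^{k}$, because $|R_\ell(I_h^{(1)}w)|\leq c h^{k}\|w\|_{H^2}$ is the best you have from a generic $H^1$ test function bound. The paper does \emph{not} treat this term as a black-box remainder: it isolates $b(v_h;z-\P z,\vphi_h)$ explicitly (equation~\eqref{eq: mat error ritz - main equation}), and in the duality step uses the special inequality
\[
  |b(v;z-\P z,w)| \leq c\,\|z-\P z\|_{L^2(\Gat)}\,\|w\|_{H^2(\Gat)},
\]
taken from the proof of Theorem~6.2 in \cite{DziukElliott_L2}, which transfers a tangential derivative from $z-\P z$ onto $w$ via the structure of $\Btensor(v)$. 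Together with Lemma~\ref{lemma: velocity estimate} for $b(v;\cdot,\cdot)-b(v_h;\cdot,\cdot)$ and the linear interpolant $I_h^{(1)}w$, this recovers $O(h^{k+1})$ in $L^2$. This step has no analogue in Theorem~\ref{theorem: Ritz error}(b), so your appeal to that proof is exactly where the argument fails; you need to insert this additional ingredient (and its iterated analogues) before the duality estimate goes through.
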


\begin{proof}
The proof is a modification of Theorem 7.3 in \cite{diss_Mansour}.
Again, to ease the presentation we suppress the $t$ argument of the surfaces norms (except some special occasions).

\medskip
For $\ell=1$: (a) We start by taking the time derivative of the definition of the Ritz map \eqref{eq: Ritz definition}, use the transport properties Lemma~\ref{lemma: transport prop}, and apply the definition of the Ritz map once more, we arrive at
\begin{align*}
    \aast(\mat_h z,\vphi_h) =& -b(v_h;z,\vphi_h) - g(v_h;z,\vphi_h) \\
     & +\aast_h(\mat_h \Pt z,\phi_h) +b_h(V_h;\Pt z,\phi_h) +g_h(V_h;\Pt z,\phi_h).
\end{align*}
Then we obtain
\begin{equation}
\label{eq: mat error ritz - main equation}
    \begin{aligned}
        \aast(\mat_h z-\mat_h \P z ,\vphi_h) =& -b(v_h;z - \P z,\vphi_h) - g(v_h;z - \P z,\vphi_h) \\
         & +F_1(\vphi_h),
    \end{aligned}
\end{equation}
where
\begin{align*}
     F_1(\vphi_h) =&\ \big(\aast_h(\mat_h \Pt z,\phi_h) - \aast(\mat_h \P z ,\vphi_h)\big) \\
                  &\ + \big(b_h(V_h;\Pt z,\phi_h) - b(v_h;\P z,\vphi_h)\big) \\
                  &\ + \big(g_h(V_h;\Pt z,\phi_h) - g(v_h;\P z,\vphi_h)\big) .
\end{align*}
Using the geometric estimates of Lemma \ref{lemma: geometric perturbation errors}, $F_1$ can be estimated as
\begin{equation}
\label{eq: F_1 bound}
    \big|F_1(\vphi_h)\big| \leq ch^{k+1} \Big( \|\mat_h\P z\|_{H^1(\Ga\t)} + \|\P z\|_{H^1(\Ga\t)}\Big) \|\vphi_h\|_{H^1(\Ga\t)}.
\end{equation}
The velocity error estimate Lemma~\ref{lemma: velocity estimate} yields
\begin{equation*}
    \|\mat_h z\|_{H^1(\Ga)} \leq \|\mat z\|_{H^1(\Ga)} + ch^k \|z\|_{H^2(\Ga)} .
\end{equation*}
Then using $\mat_h \P z$ as a test function in \eqref{eq: mat error ritz - main equation}, and using the error estimates of the Ritz map, together with the estimates above, with $h\leq h_0$, we have
\begin{align*}
    \|\mat_h \P z\|_{H^1(\Ga)}^2 =&\ \aast(\mat_h \P z ,\mat_h \P z) \\
    =& b(v_h;z - \P z,\mat_h \P z) + g(v_h;z - \P z,\mat_h \P z)  + \aast(\mat_h z ,\mat_h \P z) - F_1(\mat_h \P z) \\
    \leq &\ ch^{k}\|z\|_{H^{k+1}(\Ga)}\|\mat_h\P z\|_{H^1(\Ga)} + \|\mat_h z\|_{H^1(\Ga)}\|\mat_h \P z\|_{H^1(\Ga)} \\
    &\ + ch^{k+1} \big( \|\mat_h\P z\|_{H^1(\Ga)} + \|\P z\|_{H^1(\Ga)}\big) \|\mat_h \P z\|_{H^1(\Ga)} \\
    \leq &\ ch^{k}\|z\|_{H^{k+1}(\Ga)}\|\mat_h\P z\|_{H^1(\Ga)}
     + (\|\mat z\|_{H^1(\Ga)} + ch^k \|z\|_{H^2(\Ga)})\|\mat_h \P z\|_{H^1(\Ga)} \\
    &\ + ch^{k+1} \big( \|\mat_h\P z\|_{H^1(\Ga)} + \|z - \P z\|_{H^1(\Ga)} + \|z\|_{H^1(\Ga)}\big) \|\mat_h \P z\|_{H^1(\Ga)} \\
    \leq &\ \Big(ch^{k}\|z\|_{H^{k+1}(\Ga)} + \|\mat z\|_{H^1(\Ga)}\Big) \|\mat_h\P z\|_{H^1(\Ga)}
     + ch^{k+1} \|\mat_h\P z\|_{H^1(\Ga)}^2,
\end{align*}
absorbtion using an $h\leq h_0$, with a sufficiently small $h_0>0$, and dividing through yields
\begin{equation*}
    \|\mat_h \P z\|_{H^1(\Ga)} \leq  \|\mat z\|_{H^1(\Ga)} +  ch^{k}\|z\|_{H^{k+1}(\Ga)} .
\end{equation*}

Combining all the previous estimates and using Young's inequality, Cauchy--Schwarz inequality, and Theorem~\ref{theorem: Ritz error}, for sufficiently small $h\leq h_0$, we obtain
\begin{align*}
     \aast(\mat_h z-\mat_h \P z ,\vphi_h) \leq&\ c \|z - \P z\|_{H^1(\Ga)} \|\vphi_h\|_{H^1(\Ga)} \\
     & + ch^{k+1} \big( \|\mat_h\P z\|_{H^1(\Ga)} + \|\P z\|_{H^1(\Ga)}\big) \|\vphi_h\|_{H^1(\Ga)} \\
     \leq&\ c h^k \|z\|_{H^{k+1}(\Ga)} \|\vphi_h\|_{H^1(\Ga)} \\
     & + ch^{k+1} \big( \|\mat z\|_{H^1(\Ga)} + (1+ch^k)\|z\|_{H^{k+1}(\Ga)}\big) \|\vphi_h\|_{H^1(\Ga)} \\
     \leq&\ c h^k \Big( \|z\|_{H^{k+1}(\Ga)} + h\|\mat z\|_{H^1(\Ga)} \Big) \|\vphi_h\|_{H^1(\Ga)}.
\end{align*}

Then, similarly to the previous proof we have
\begin{align*}
    \|\mat_hz-\mat_h\P z\|_{H^1(\Ga)}^2
    \leq&\ \aast(\mat_h z-\mat_h \P z, \mat_h z-\mat_h \P z) \\
    =&\ \aast(\mat_h z - \mat_h \P z, \mat_h z-I_h \mat z) + \aast(\mat_h z-\mat_h \P z, I_h \mat z-\mat_h \P z) \\
    \leq&\  \|\mat_h z-\mat_h \P z\|_{H^1(\Ga)} \|\mat_h z-I_h \mat z\|_{H^1(\Ga)} \\
    &\ +  c h^k \Big( \|z\|_{H^{k+1}(\Ga)} + h\|\mat z\|_{H^1(\Ga)} \Big) \|I_h \mat z-\mat_h \P z\|_{H^1(\Ga)}.
\end{align*}
Finally the interpolation estimates, Young's inequality, absorption using a sufficiently small $h\leq h_0$, yields the gradient estimate.

(b) The $L^2$-estimate again follows from the Aubin--Nitsche trick. Let us now consider the problem
\begin{equation*}
    -\laplace_{\Ga\t}w + w = \mat_h z-\mat_h \P z \qquad \textrm{on}\quad  \Ga\t,
\end{equation*}
together with the usual elliptic estimate, for the solution $w\in H^2(\Ga\t)$
\begin{equation*}
    \|w\|_{H^2(\Ga\t)} \leq c\|\mat_h z-\mat_h \P z\|_{L^2(\Ga\t)},
\end{equation*}
again, $c$ is independent of $t$ and $h$.

Following the proof of Theorem~6.2 in \cite{DziukElliott_L2}, let us first bound
\begin{align*}
    -b(v_h;z - \P z,I_h^{(1)} w) = &\ b(v_h;z - \P z,w-I_h^{(1)} w) - b(v_h;z - \P z,w) \\
    \leq &\ ch^{k} \|z\|_{H^{k+1}(\Ga)} \ ch \|w\|_{H^2(\Ga)} - b(v_h;z - \P z,w) \\
    =&\ ch^{k+1} \|z\|_{H^{k+1}(\Ga)} \|w\|_{H^2(\Ga)} + b(v;z - \P z,w) \\
    &\ + b(v;z - \P z,w) - b(v_h;z - \P z,w),
\end{align*}
where again $I_h^{(1)}$ denotes the linear finite element interpolation operator on $\Ga_h^k\t$.

The pair in the last line can be estimated, using Lemma~\ref{lemma: velocity estimate}, by
\begin{align*}
    b(v;z - \P z,w) - b(v_h;z - \P z,w)
    \leq &\ \int_{\Gat} |\Btensor(v)-\Btensor(v_h)| |\nbg(z - \P z)| |\nbg w| \\
    \leq &\ ch^{2k} \|z\|_{H^{k+1}(\Ga)} \|w\|_{H^1(\Ga)} .
\end{align*}
Finally, for the remaining term the proof of Theorem~6.2 in \cite{DziukElliott_L2} yields
\begin{equation*}
    b(v;z - \P z,w) \geq -c\|z - \P z\|_{L^2(\Ga)} \|w\|_{H^2(\Ga)} \geq -c h^{k+1}\|z\|_{H^{k+1}(\Ga)}\|w\|_{H^2(\Ga)} .
\end{equation*}
For the other bilinear form in \eqref{eq: mat error ritz - main equation} we have
\begin{equation*}
    g(v_h;z - \P z,\vphi_h) \leq ch^{k+1}\|z\|_{H^{k+1}(\Ga)}\|w\|_{H^1(\Ga)} .
\end{equation*}
The combination of all these estimates with \eqref{eq: F_1 bound}, yields
\begin{equation*}
    \aast(\mat_h z -\mat_h\P z,I_h w) \leq ch^{k+1}\|z\|_{H^{k+1}(\Ga)}\|w\|_{H^2(\Ga)} .
\end{equation*}

By testing the above elliptic weak problem with $z-\P z$, and using the above bound and the gradient estimate from (a), we obtain
\begin{align*}
    \|\mat_h z-\mat_h \P z\|_{L^2(\Ga)}^2 =&\ \aast(\mat_h z-\mat_h \P z,w) \\
    =&\ \aast(\mat_h z-\mat_h \P z,w-I_h^{(1)} w) + \aast(\mat_h z-\mat_h \P z,I_h^{(1)} w) \\
    \leq&\ ch^k \Big( \|z\|_{H^{k+1}(\Ga)} + h\|\mat z\|_{H^1(\Ga)} \Big) \ ch \|w\|_{H^2(\Ga\t)}
     + ch^{k+1}\|z\|_{H^{k+1}(\Ga)}\|w\|_{H^2(\Ga)} \\
    \leq &\ ch^{k+1} \Big( \|z\|_{H^{k+1}(\Ga)} + h\|\mat z\|_{H^1(\Ga)} \Big) \|w\|_{H^2(\Ga)} .
\end{align*}

For $\ell>1$ the proof is analogous.
\end{proof}

\section{Error bounds for the semi- and full discretisation}
\label{section: error bounds}

\subsection{Convergence proof for the semidiscretisation}
By combining the error estimates in the Ritz map and in its material derivatives and the geometric results of Section~\ref{section: geometric estimates} we prove convergence of the high-order ESFEM semidiscretisation.

\smallskip
\begin{proof}[Proof of Theorem~\ref{theorem: semidiscrete convergence}]
    The result is simply shown by repeating the arguments of Section~7 in \cite{DziukElliott_L2} for our setting, but using the high-order versions of all results: geometric estimates Lemma~\ref{lemma: geometric est}, perturbation estimates of bilinear forms Lemma \ref{lemma: geometric perturbation errors} and Ritz map error estimates Lemma~\ref{theorem: Ritz error} and \ref{theorem: Ritz mat error}.
\end{proof}

\subsection{Convergence proof for the full discretisation}

\subsubsection{Bound of the semidiscrete residual}

We follow the approach of Section~8.1 of \cite{diss_Mansour} and Section~5 of \cite{LubichMansourVenkataraman_bdsurf}. By defining the ESFEM residual $R_h(\cdot,t) = \sum_{j=1}^N r_j\t \chi_j(\cdot,t)\in S_h^k\t$ as
\begin{equation}
\label{eq: residual}
    \int_{\Ga_h^k\t}\!\!\!\!  R_h \phi_h = \diff \int_{\Ga_h^k\t}\!\!\!\!  \Pt u \phi_h +\int_{\Ga_h^k\t}\! \nbgh (\Pt u) \cdot \nbgh \phi_h - \int_{\Ga_h^k\t}\!\!\!\! (\Pt u)  \mat_h \phi_h,
\end{equation}
where $\phi_h\in S_h^k\t$, and $\Pt u(\cdot,t)$ is the Ritz map of the smooth solution $u$.


We now show the optimal order $H_h\inv$-norm estimate of the residual $R_h$.
\begin{theorem}
\label{theorem: res bound}
    Let the solution $u$ of the parabolic problem be sufficiently smooth. Then there exist $C>0$ and $h_0>0$, \st\ for all $h\leq h_0$ and $t\in[0,T]$, the finite element residual $R_h$ of the Ritz map is bounded as 
    \begin{equation*}
        \|R_h\|_{H_h\inv(\Ga_h\t)} \leq C h^{k+1}.
    \end{equation*}
\end{theorem}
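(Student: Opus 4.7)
The plan is to test the defining identity~\eqref{eq: residual} against $\phi_h\in S_h^k(t)$, rewrite the time derivative via the discrete transport property, compare the resulting expression term by term with the continuous weak formulation~\eqref{eq: weak form} evaluated at the lifted test function $\phi_h^l$, and then bound each mismatch using the geometric, Ritz and velocity estimates already established.

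\textbf{Step 1: rewrite the residual without a time derivative.} Since the discrete basis functions satisfy the transport property $\mat_h\chi_j=0$, so does $\phi_h$. Applying Lemma~\ref{lemma: transport prop} to $m_h(\Pt u,\phi_h)$ yields
\begin{equation*}
m_h(R_h,\phi_h)=m_h(\mat_h\Pt u,\phi_h)+g_h(V_h;\Pt u,\phi_h)+a_h(\Pt u,\phi_h).
\end{equation*}

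\textbf{Step 2: obtain the analogous continuous identity with test function $\phi_h^l$.} The lifted basis also satisfies $\mat_h\phi_h^l=0$, so $\mat\phi_h^l=(v-v_h)\cdot\nbg\phi_h^l$ using \eqref{eq: velocity difference is tangent}. Applying the transport property on the continuous side to $m(u,\phi_h^l)$ and combining with the weak formulation \eqref{eq: weak form} tested against $\phi_h^l$ gives
\begin{equation*}
m(\mat_h u,\phi_h^l)+g(v_h;u,\phi_h^l)+a(u,\phi_h^l)=m\bigl(u,(v-v_h)\cdot\nbg\phi_h^l\bigr).
\end{equation*}

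\textbf{Step 3: take the difference of the two identities.} Subtracting and using the Ritz map definition $a_h^\ast(\Pt u,\phi_h)=a^\ast(u,\phi_h^l)$ to rewrite $a_h(\Pt u,\phi_h)-a(u,\phi_h^l)=m(u,\phi_h^l)-m_h(\Pt u,\phi_h)$, the residual splits into three pairs of ``discrete vs.\ continuous'' bilinear-form differences plus the tangential velocity term:
\begin{align*}
m_h(R_h,\phi_h)=&\ \bigl[m_h(\mat_h\Pt u,\phi_h)-m(\mat_h u,\phi_h^l)\bigr]\\
&+\bigl[g_h(V_h;\Pt u,\phi_h)-g(v_h;u,\phi_h^l)\bigr]\\
&+\bigl[m(u,\phi_h^l)-m_h(\Pt u,\phi_h)\bigr]+m\bigl(u,(v-v_h)\cdot\nbg\phi_h^l\bigr).
\end{align*}

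\textbf{Step 4: bound each piece.} Each of the first three brackets is further split as (perturbation of bilinear forms between $\Gamma_h$ and $\Gamma$) $+$ (Ritz-map error at the continuous level, using $\mat_h w_h^l=(\mat_h w_h)^l$). The perturbations are controlled by Lemma~\ref{lemma: geometric perturbation errors}, giving an $h^{k+1}$ factor multiplying $L^2$ norms; the Ritz errors $u-\P u$ and $\mat_h(u-\P u)$ are controlled by Theorems~\ref{theorem: Ritz error} and~\ref{theorem: Ritz mat error}, again with an $h^{k+1}$ factor; together these produce $O(h^{k+1})\|\phi_h^l\|_{L^2}$. The last term is estimated by $\|u\|_{L^2}\|v-v_h\|_{L^\infty}\|\nbg\phi_h^l\|_{L^2}\leq ch^{k+1}\|\phi_h^l\|_{H^1}$ via Lemma~\ref{lemma: velocity estimate}. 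Norm equivalence (Lemma~\ref{lemma: equivalence of discrete norms}) transfers $\|\phi_h^l\|_{H^1}$ to $\|\phi_h\|_{H^1(\Ga_h(t))}$, and dividing by $\|\phi_h\|_{H^1(\Ga_h(t))}$ and taking the supremum gives the claimed $H_h^{-1}$-bound.

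The only delicate point is the very last term: the gradient estimate in Lemma~\ref{lemma: velocity estimate} loses a power of $h$, so the tangential discrepancy $(v-v_h)\cdot\nbg\phi_h^l$ can only be absorbed into the $H^1$-norm of $\phi_h^l$, not the $L^2$-norm. This is precisely why the residual is measured in the negative-order dual norm $H_h^{-1}$ rather than in $L^2$, and it is the single feature that forces the choice of norm in the statement.
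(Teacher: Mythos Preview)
Your proof is correct and follows essentially the same route as the paper: the paper also applies the discrete transport lemma to remove the time derivative, uses the continuous transport lemma together with the weak formulation for the lifted test function, subtracts, invokes the Ritz identity to convert the $a_h-a$ pair into an $m-m_h$ pair, and then bounds the four resulting terms via Lemma~\ref{lemma: geometric perturbation errors}, Theorems~\ref{theorem: Ritz error}--\ref{theorem: Ritz mat error}, Lemma~\ref{lemma: velocity estimate} and norm equivalence. Your additional remark explaining why the $H_h^{-1}$-norm (rather than $L^2$) is forced by the tangential velocity term is accurate and not made explicit in the paper.
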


\begin{proof}
(a) We start by applying the discrete transport property to the residual equation \eqref{eq: residual}
\begin{align*}
    m_h(R_h,\phi_h) =&\ \diff m_h(\Pt u,\phi_h) + a_h(\Pt u,\phi_h) - m_h(\Pt u,\mat_h\phi_h) \\
    =&\ m_h(\mat_h\Pt u,\phi_h) + a_h(\Pt u,\phi_h) + g_h(V_h;\Pt u,\phi_h).
\end{align*}

(b) We continue by the transport property with discrete material derivatives from Lemma \ref{lemma: transport prop}, but for the weak form, with $\vphi:=\vphi_h=(\phi_h)^l$
\begin{align*}
    0 =&\ \diff m(u,\vphi_h) + a(u,\vphi_h) - m(u,\mat\vphi_{h}) \\
    =&\ m(\mat_h u,\vphi_h) + a(u,\vphi_h) + g(v_h;u,\vphi_h) +
    m(u,\mat_{h}\varphi_{h} - \mat \varphi_{h}).
\end{align*}

(c) Subtraction of the two equations, using the definition of the Ritz map \eqref{eq: Ritz definition},
and using that
\begin{equation*}
    \mat_{h}\varphi_{h} - \mat \varphi_{h} = (v_{h}-v) \cdot \nabla_{\Gamma}\varphi_{h}
\end{equation*}
holds, we obtain
\begin{align*}
    m_h(R_h,\phi_h) =&\ m_h(\mat_h\Pt u,\phi_h) - m(\mat_h u,\vphi_h) \\
                    &\ + g_h(V_h;\Pt u,\phi_h) - g(v_h;u,\vphi_h) \\
                    &\ + m(u,\vphi_h) - m_h(\Pt u, \phi_h) \\
                    &\ + m(u, (v_{h}-v)  \cdot \nabla_{\Gamma}\varphi_{h}).
\end{align*}
All the pairs can be easily estimated separately as $c h^{k+1} \|\vphi_h\|_{H^1(\Gat)}$ by combining the geometric perturbation estimates of Lemma~\ref{lemma: geometric perturbation errors}, the velocity estimate of Lemma~\ref{lemma: velocity estimate}, and the error estimates of the Ritz map from Theorem~\ref{theorem: Ritz error} and \ref{theorem: Ritz mat error}. The proof is finished using the definition of $H_h\inv$-norm and the equivalence of norms Lemma~\ref{lemma: equivalence of discrete norms}.
\end{proof}


\subsubsection{Proof of Theorem~\ref{theorem: ESFEM BDF error}}
Using the error estimate for the BDF methods Theorem~\ref{theorem: BDF error estimates} and using the bounds for the semidiscrete residual Theorem~\ref{theorem: res bound} we give here a proof for the fully discrete error estimates of Theorem~\ref{theorem: ESFEM BDF error}.

\begin{proof}[Proof of Theorem~\ref{theorem: ESFEM BDF error}]
    The global error is decomposed into two parts
    \begin{equation*}
        u_h^n - u(\cdot,t_n) = \big(u_h^n - (\P u)(\cdot,t_n)\big) + \big((\P u)(\cdot,t_n) - u(\cdot,t_n)\big),
    \end{equation*}
    and the terms are estimated by results from above.

    The first one is estimated by results for BDF methods Theorem~\ref{theorem: BDF error estimates}  together with the residual bound Theorem~\ref{theorem: res bound} and by the Ritz map error estimates Theorem~\ref{theorem: Ritz error} and \ref{theorem: Ritz mat error}.

    The second term is only estimated by the error estimates for the Ritz map and for its material derivatives.
\end{proof}
\begin{remark}
    In Remark~\ref{remark: Runge--Kutta extension} we noted that the analogous fully discrete results can be shown for algebraically stable implicit Runge--Kutta methods. To be more precise: for the Runge--Kutta analog, instead of Theorem~\ref{theorem: BDF error estimates} one has to use the error bounds of Theorem~5.1 in \cite{DziukLubichMansour_rksurf}, but otherwise the proof remains the same.
\end{remark}

\section{Implementation}
\label{section: implementation}

The implementation of the high-order ESFEM code follows the typical way of finite element mass matrix, stiffness matrix and load vector assembly, mixed with techniques from isoparametric FEM theory. This was also used for linear ESFEM, see Section~7.2 of \cite{DziukElliott_ESFEM}.

Similarly to the linear case, a curved element $E_h$ of the $k$-order interpolation surface $\Ga_h\t$ is paramet\-ri\-sed over the reference triangle $E_0$, chosen to be the unit triangle. Then the polynomial map of degree $k$ between $E_h$ and $E_0$ is used to compute the local matrices. All the computations are done on the reference element, using the Dunavant quadrature rule, see \cite{Dunavant}. 
Then the local values are summed up to their correct places in the global matrices.

In a typical case the surface is evolved by solving a series of ODEs, hence only the initial mesh is created based on $\Ga(0)$. Naturally, the problem of velocity based grid distortion is still present. Possible overcomes are methods using the DeTurck trick see \cite{ElliottFritz}, or using ALE finite elements see \cite{ElliottVenkataraman_ALE}, \cite{KovacsPower_ALEdiscrete}.

\section{Numerical experiments}
\label{section: numerics}

We performed various numerical experiments with quadratic approximation of the surface $\Gat$ and using quadratic ESFEM to illustrate our theoretical results.

\subsection{Example 1: Parabolic problem on a stationary surface}

Let us shortly report on numerical experiments for parabolic problems on a stationary surface, as a benchmark problem. Let $\Ga\subset\R^3$ be the unit sphere, and let us consider the parabolic surface PDE
\begin{equation*}
    \pa_t u - \laplace_\Ga u = f,
\end{equation*}
with given initial value and inhomogeneity $f$ chosen such, that the solution is $u(x,t)=e^{-6t}x_1x_2$.

Let $(\mathcal{T}_k)_{k=1,2,\dotsc,n}$ and $(\tau_k)_{k=1,2,\dotsc,n}$ be a series of meshes and time steps, respectively, such that $2 h_k = h_{k-1}$ and $2 \tau_k = \tau_{k-1}$, with $h_1=\sqrt2$ and $\tau_1=0.2$. For each mesh $\mathcal{T}_k$ with corresponding stepsize $\tau_k$ we numerically solve the surface PDE using second order ESFEM combined with the third order BDF methods. Then, by $e_k$ we denote the error corresponding to the mesh $\mathcal{T}_k$ and step size $\tau_k$. We then compute the errors between the lifted numerical solution and the exact solution using the following norm and seminorm:
\begin{alignat*}{3}
    L^\infty(L^2):&\qquad \max_{1\leq n \leq N}\|u_h^n - u(\cdot,t_n)\|_{L^2(\Ga(t_n))},\\
    L^2(H^1):&\qquad  \Big(\tau \sum_{n=1}^{N} \|\nb_{\Ga(t_n)}\big(u_h^n - u(\cdot,t_n)\big)\|_{L^2(\Ga(t_n))}^2\Big)^{1/2}.
\end{alignat*}
Using the above norms, the experimental order of convergence rates (EOCs) are computed  by
\begin{equation*}
    \textnormal{EOC}_{k}=\frac{\ln(e_{k}/e_{k-1})}{\ln(2)}, \qquad (k=2,3, \dotsc,n).
\end{equation*}

In Table~\ref{table: stationary EOCs} we report on the EOCs, for the second order ESFEM coupled with the BDF3 method, theoretically we expect EOC$\approx3$ in the $L^\infty(L^2)$ norm, and EOC$\approx2$ in the $L^2(H^1)$ seminorm.
\begin{table}[!ht]
    \centering
    \begin{tabular}{r r  l l l l}
        \toprule
        level & dof & $L^\infty(L^2)$ & EOC & $L^2(H^1)$ & EOC \\
        \midrule
            $1$ & $6$     & $5.3113\cdot 10^{-3}$ & -        & $8.0694\cdot 10^{-3}$ & - \\
            $2$ & $18$    & $2.9257\cdot 10^{-3}$ & $0.9511$ & $4.3162\cdot 10^{-3}$ & $0.99805$ \\
            $3$ & $66$    & $9.2303\cdot 10^{-4}$ & $1.7122$ & $1.9050\cdot 10^{-3}$ & $1.2139$ \\
            $4$ & $258$   & $1.7285\cdot 10^{-4}$ & $2.4338$ & $5.4403\cdot 10^{-4}$ & $1.8207$ \\
            $5$ & $1026$  & $2.6463\cdot 10^{-5}$ & $2.7124$ & $1.2265\cdot 10^{-4}$ & $2.1529$ \\
            $6$ & $4098$  & $3.6845\cdot 10^{-6}$ & $2.8457$ & $2.4772\cdot 10^{-5}$ & $2.3088$ \\
        \bottomrule
    \end{tabular}
    \caption{Errors and EOCs in the $L^\infty(L^2)$ and $L^2(H^1)$ norms for the stationary problem}
    \label{table: stationary EOCs}
\end{table}

In Figure~\ref{fig: stationary1} and \ref{fig: stationary2} we report on the errors
\begin{equation*}
    \|u(\cdot,N \tau)-u_h^N\|_{L^2(\Ga)} \andquad \|\nbg(u(\cdot,N \tau)-u_h^N)\|_{L^2(\Ga)} .
\end{equation*}
at time $N\tau=1$. The logarithmic plots show the errors against the mesh width $h$ (in Figure~\ref{fig: stationary1}), and against time step size $\tau$ (in Figure~\ref{fig: stationary2}).

The different lines are corresponding to different time step sizes and to different mesh refinements, respectively in Figure~\ref{fig: stationary1} and \ref{fig: stationary2}.
In both figures we can observe two regions: In Figure~\ref{fig: stationary1}, a region where the spatial discretisation error dominates, matching to the convergence rates of our theoretical results, and a region, with fine meshes, where the time discretisation error is dominating (the error curves are flatting out). In Figure~\ref{fig: stationary2}, the same description applies, but with reversed roles. First the time discretisation error dominates, while for smaller stepsizes the spatial error is dominating.
The convergence in space (Figure~\ref{fig: stationary1}), and in time (Figure~\ref{fig: stationary2}) can both be nicely observed in agreement with the theoretical results (note the reference lines).


\begin{figure}[!hb]
    \centering
    \includegraphics[width=\textwidth,height=0.475\textheight]{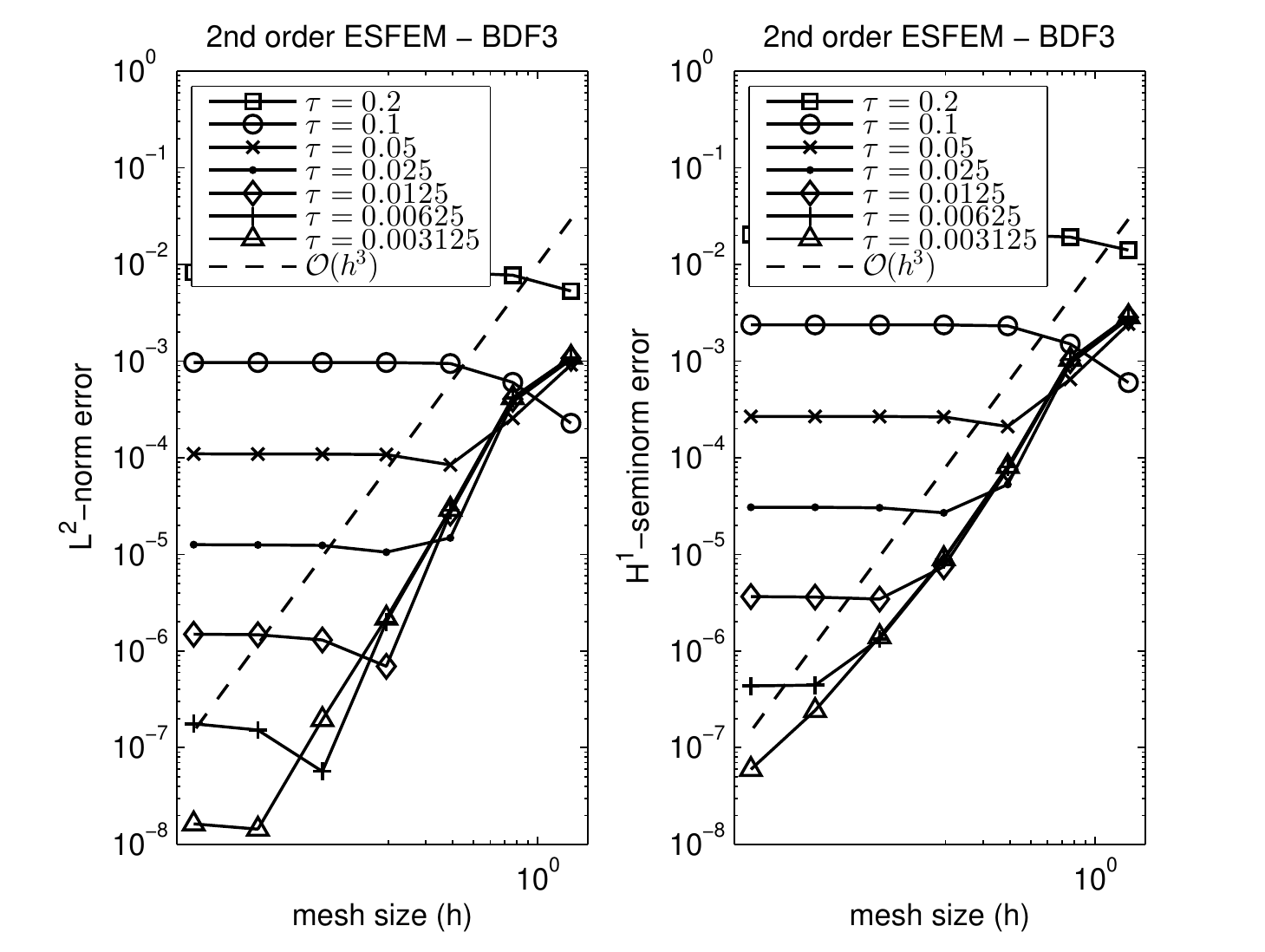}
    \caption{Spatial convergence of the BDF3 / quadratic SFEM discretisation for the stationary surface PDE}\label{fig: stationary1}
    \includegraphics[width=\textwidth,height=0.475\textheight]{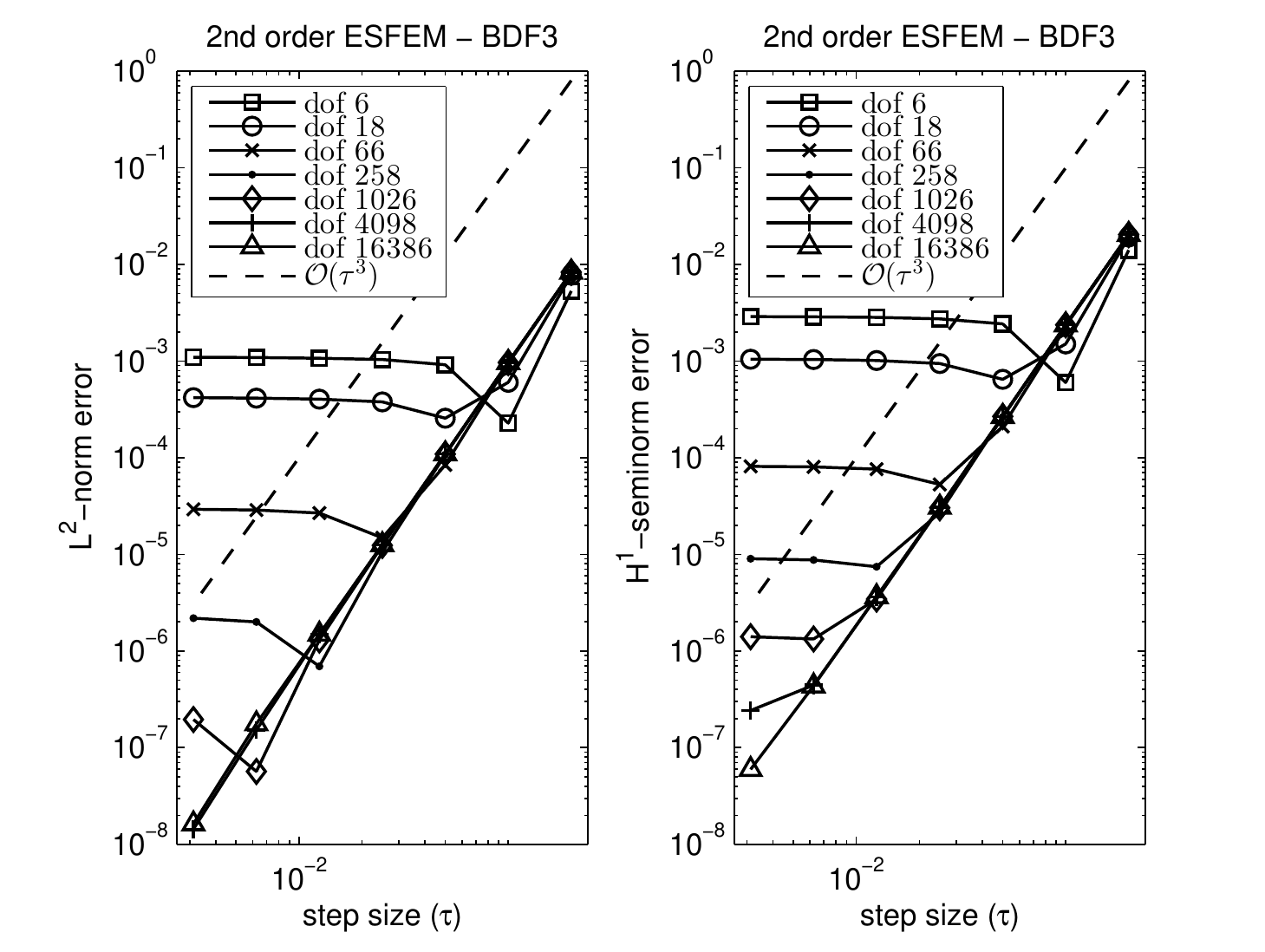}
    \caption{Temporal convergence of the BDF3 / quadratic SFEM discretisation for the stationary surface PDE}\label{fig: stationary2}
\end{figure}

\clearpage

\subsection{Example 2: Evolving surface parabolic problem}

In the following experiment we consider the parabolic problem \eqref{eq: strong form} on the evolving surface given by
\begin{equation*}
    \Ga\t = \big\{ x\in\R^3 \ \big| \ a(t)\inv x_1^2 + x_2^2 + x_3^2 - 1 = 0 \big\},
\end{equation*}
where $a(t)=1+\frac{1}{4}\sin(2\pi t)$, see, e.g.\ \cite{DziukElliott_ESFEM}, \cite{DziukLubichMansour_rksurf}, \cite{diss_Mansour}, with given initial value and inhomogeneity $f$ chosen such that the solution is $u(x,t)=e^{-6t}x_1x_2$.

Similarly to the stationary surface case we again report on experimental order of convergences and similar spatial and temporal convergence plots. They are all produced exactly as described above.

The EOCs for the evolving surface problem solved with BDF method of order $3$ and evolving surface finite elements of second order can be seen in Table \ref{table: evolving surface EOCs}.
\begin{table}[!h]
    \centering
    \begin{tabular}{r r  l l l l}
        \toprule
        level & dof & $L^\infty(L^2)$ & EOC & $L^2(H^1)$ & EOC \\
        \midrule
            1 & 6    & $5.7898\cdot 10^{-3}$ & -      & $8.4446\cdot 10^{-3}$ & -\\
            2 & 18   & $9.5840\cdot 10^{-4}$ & 2.8688 & $1.8173\cdot 10^{-3}$ & 2.4504\\
            3 & 66   & $4.0725\cdot 10^{-4}$ & 1.2704 & $1.6549\cdot 10^{-3}$ & 0.13895\\
            4 & 258  & $9.1096\cdot 10^{-5}$ & 2.1755 & $5.4513\cdot 10^{-4}$ & 1.6133\\
            5 & 1026 & $1.3847\cdot 10^{-5}$ & 2.7226 & $1.2774\cdot 10^{-4}$ & 2.0971\\
            6 & 4098 & $1.9534\cdot 10^{-6}$ & 2.8267 & $2.6135\cdot 10^{-5}$ & 2.2901\\
        \bottomrule
    \end{tabular}
    \caption{Errors and EOCs in the $L^\infty(L^2)$ and $L^2(H^1)$ norms for the evolving surface problem}
    \label{table: evolving surface EOCs}
\end{table}

The errors at time $N\tau=1$ in different norms can be seen in the following plots: the different lines are again corresponding to different time step sizes and to different mesh refinements, respectively in Figure~\ref{fig: evolving surface 1} and \ref{fig: evolving surface 2}.


\begin{figure}[!ht]
    \centering
    \includegraphics[width=\textwidth,height=0.475\textheight]{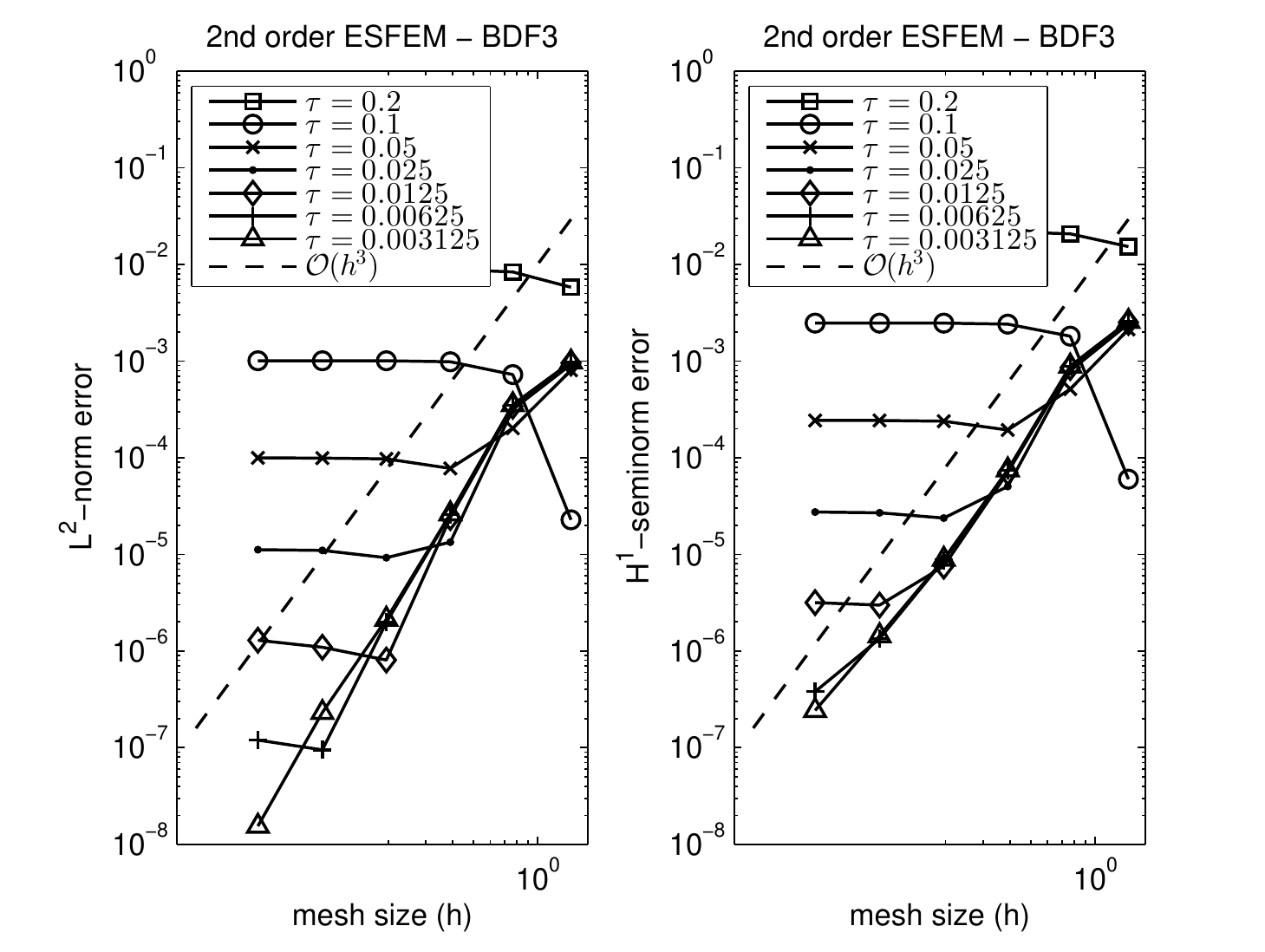}
    \caption{Spatial convergence of the BDF3 / quadratic ESFEM discretisation for the evolving surface PDE}\label{fig: evolving surface 1}
    \includegraphics[width=\textwidth,height=0.475\textheight]{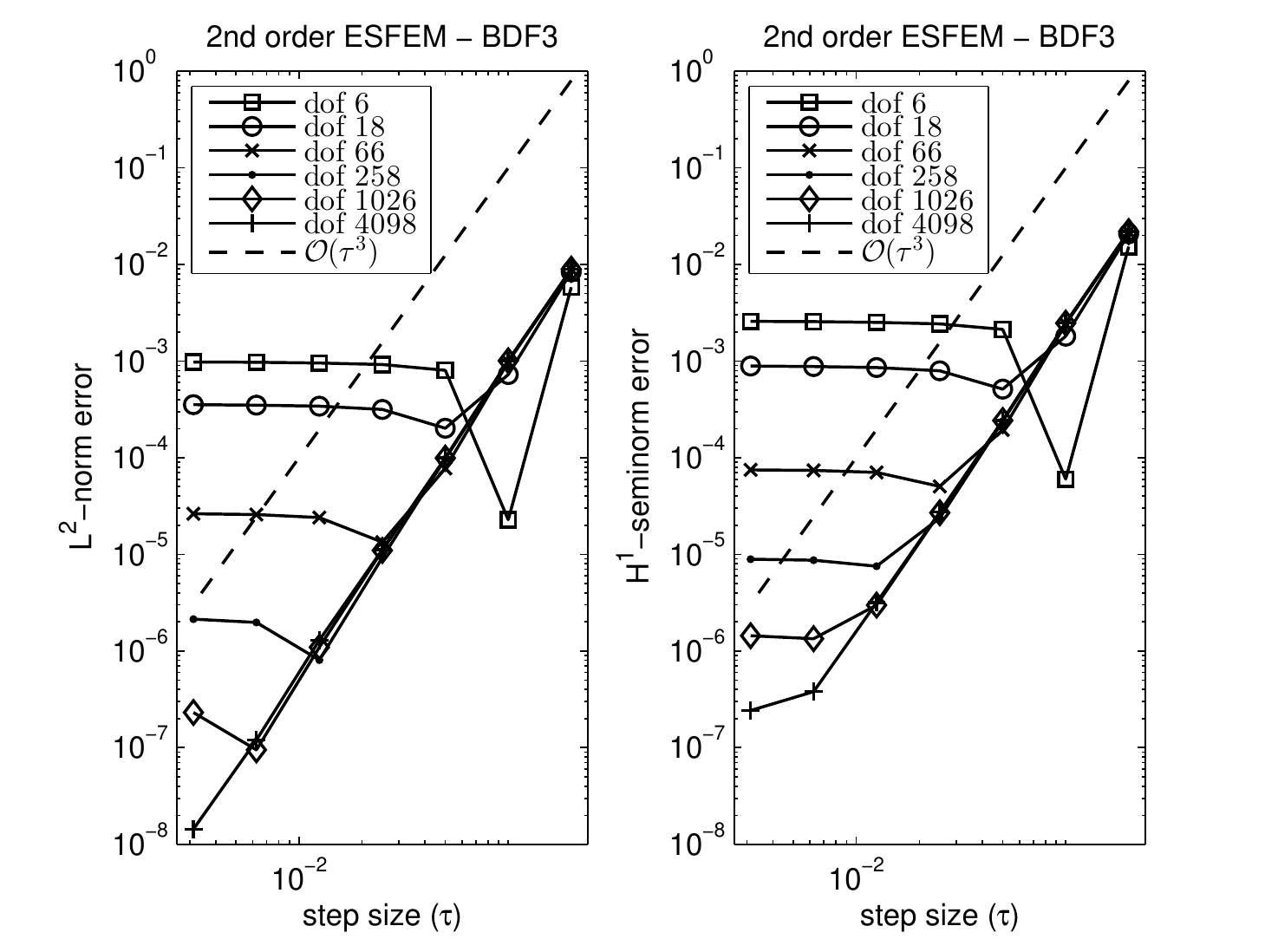}
    \caption{Temporal convergence of the BDF3 / quadratic ESFEM discretisation  for the evolving surface PDE}\label{fig: evolving surface 2}
\end{figure}

\clearpage
\subsection{Example 3: one dimensional experiments}

In the same fashion we also report on 1D numerical experiments, however we restrict ourselves only to present the corresponding tables and figures. The considered stationary and evolving surface PDEs are simply the one dimensional versions of the ones considered above.

\begin{table}[!ht]
    \centering
    \begin{tabular}{r r  l l l l}
        \toprule
        level & dof & $L^\infty(L^2)$ & EOC & $L^2(H^1)$ & EOC \\
        \midrule
            1 & 16 &   $6.1355\cdot 10^{-3}$ & - &    $6.5036\cdot 10^{-3}$ & - \\
            2 & 32 &   $2.0671\cdot 10^{-3}$ & 1.56 & $2.6949\cdot 10^{-3}$ & 1.27\\
            3 & 64 &   $4.1206\cdot 10^{-4}$ & 2.32 & $5.4668\cdot 10^{-4}$ & 2.30\\
            4 & 128 &  $6.4495\cdot 10^{-5}$ & 2.67 & $8.6709\cdot 10^{-5}$ & 2.65\\
            5 & 256 &  $9.0589\cdot 10^{-6}$ & 2.83 & $1.2207\cdot 10^{-5}$ & 2.82\\
            6 & 512 &  $1.2008\cdot 10^{-6}$ & 2.91 & $1.6195\cdot 10^{-6}$ & 2.91\\
            7 & 1024 & $1.5458\cdot 10^{-7}$ & 2.95 & $2.0856\cdot 10^{-7}$ & 2.95\\
            8 & 2048 & $1.9609\cdot 10^{-8}$ & 2.97 & $2.6462\cdot 10^{-8}$ & 2.97\\
        \bottomrule
    \end{tabular}
    \caption{1D: Errors and EOCs in the $L^\infty(L^2)$ and $L^2(H^1)$ norms for the stationary problem}
\end{table}

\begin{figure}[!hb]
    \centering
    \includegraphics[width=\textwidth,height=0.475\textheight]{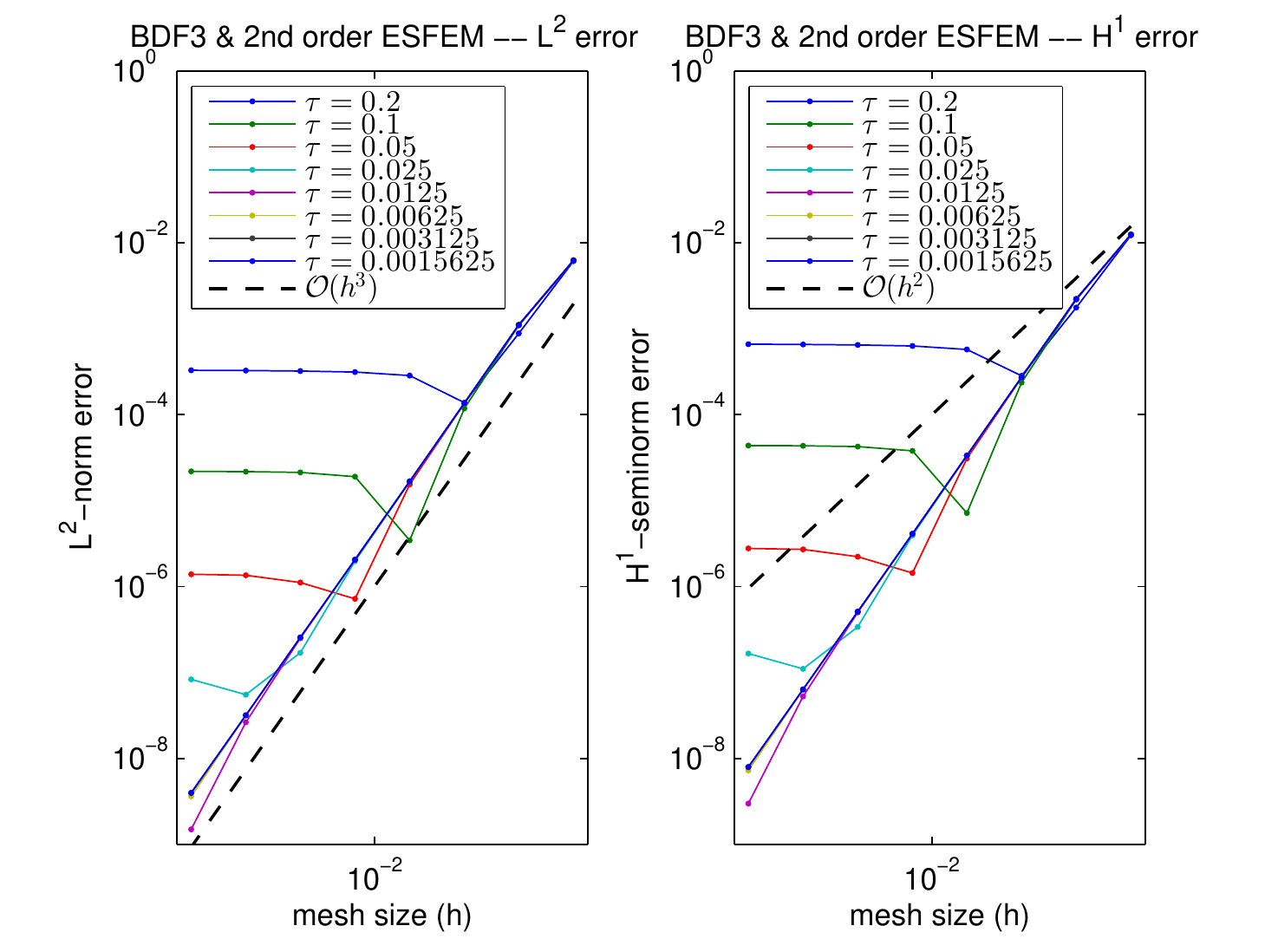}
    \caption{1D: Spatial convergence of the BDF3 / quadratic SFEM discretisation for the stationary surface PDE}
    \includegraphics[width=\textwidth,height=0.475\textheight]{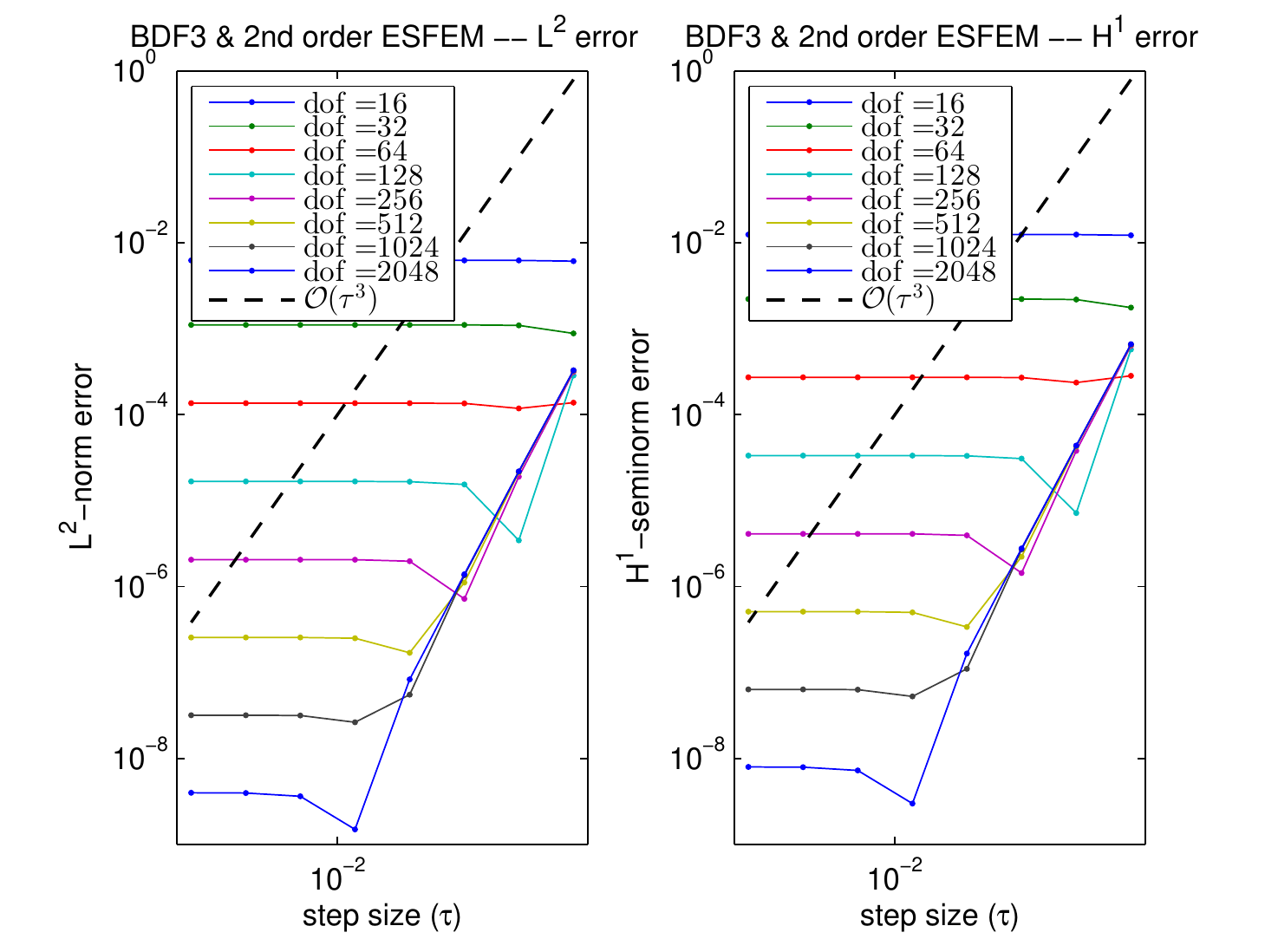}
    \caption{1D: Temporal convergence of the BDF3 / quadratic SFEM discretisation for the stationary surface PDE}
\end{figure}

\clearpage

\begin{table}[!h]
    \centering
    \begin{tabular}{r r  l l l l}
        \toprule
        level & dof & $L^\infty(L^2)$ & EOC & $L^2(H^1)$ & EOC \\
        \midrule
            1 & 16 &   $2.2866\cdot 10^{-3}$ & - &      $2.3829\cdot 10^{-3}$ & - \\
            2 & 32 &   $1.7292\cdot 10^{-3}$ & 0.40 & $1.9281\cdot 10^{-3}$ & 0.30\\
            3 & 64 &   $1.8684\cdot 10^{-4}$ & 3.21 & $2.2002\cdot 10^{-4}$ & 3.13\\
            4 & 128 &  $6.2647\cdot 10^{-5}$ & 1.57 & $4.9693\cdot 10^{-5}$ & 2.14\\
            5 & 256 &  $1.1521\cdot 10^{-5}$ & 2.44 & $9.6061\cdot 10^{-6}$ & 2.37\\
            6 & 512 &  $1.7156\cdot 10^{-6}$ & 2.74 & $1.4725\cdot 10^{-6}$ & 2.70\\
            7 & 1024 & $2.3306\cdot 10^{-7}$ & 2.87 & $2.0291\cdot 10^{-7}$ & 2.85\\
            8 & 2048 & $3.0341\cdot 10^{-8}$ & 2.94 & $2.6601\cdot 10^{-8}$ & 2.93\\
        \bottomrule
    \end{tabular}
    \caption{1D: Errors and EOCs in the $L^\infty(L^2)$ and $L^2(H^1)$ norms for the evolving surface problem}
\end{table}

\begin{figure}[!ht]
    \centering
    \includegraphics[width=\textwidth,height=0.475\textheight]{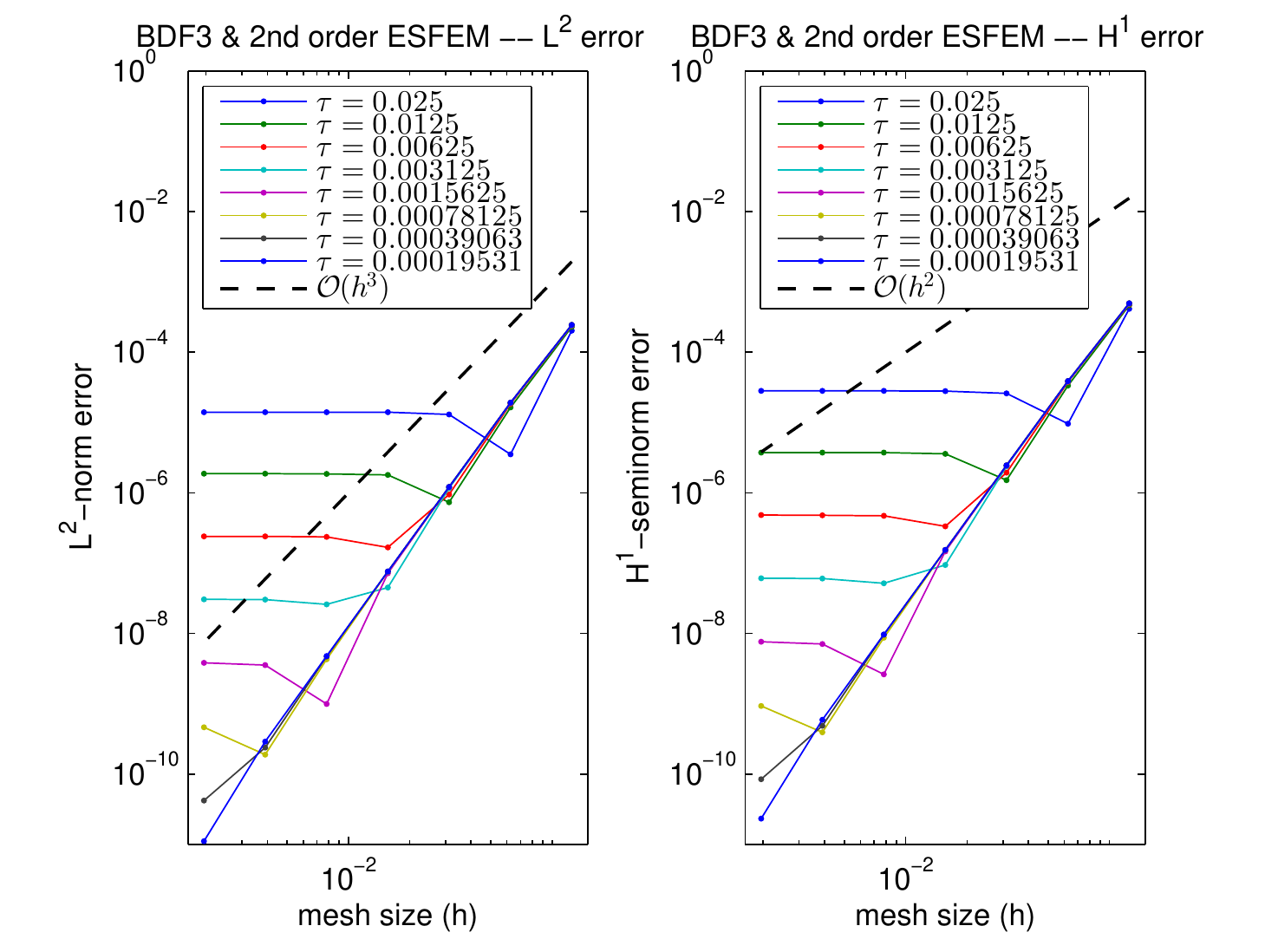}
    \caption{1D: Spatial convergence of the BDF3 / quadratic ESFEM discretisation for the evolving surface PDE}
    \includegraphics[width=\textwidth,height=0.475\textheight]{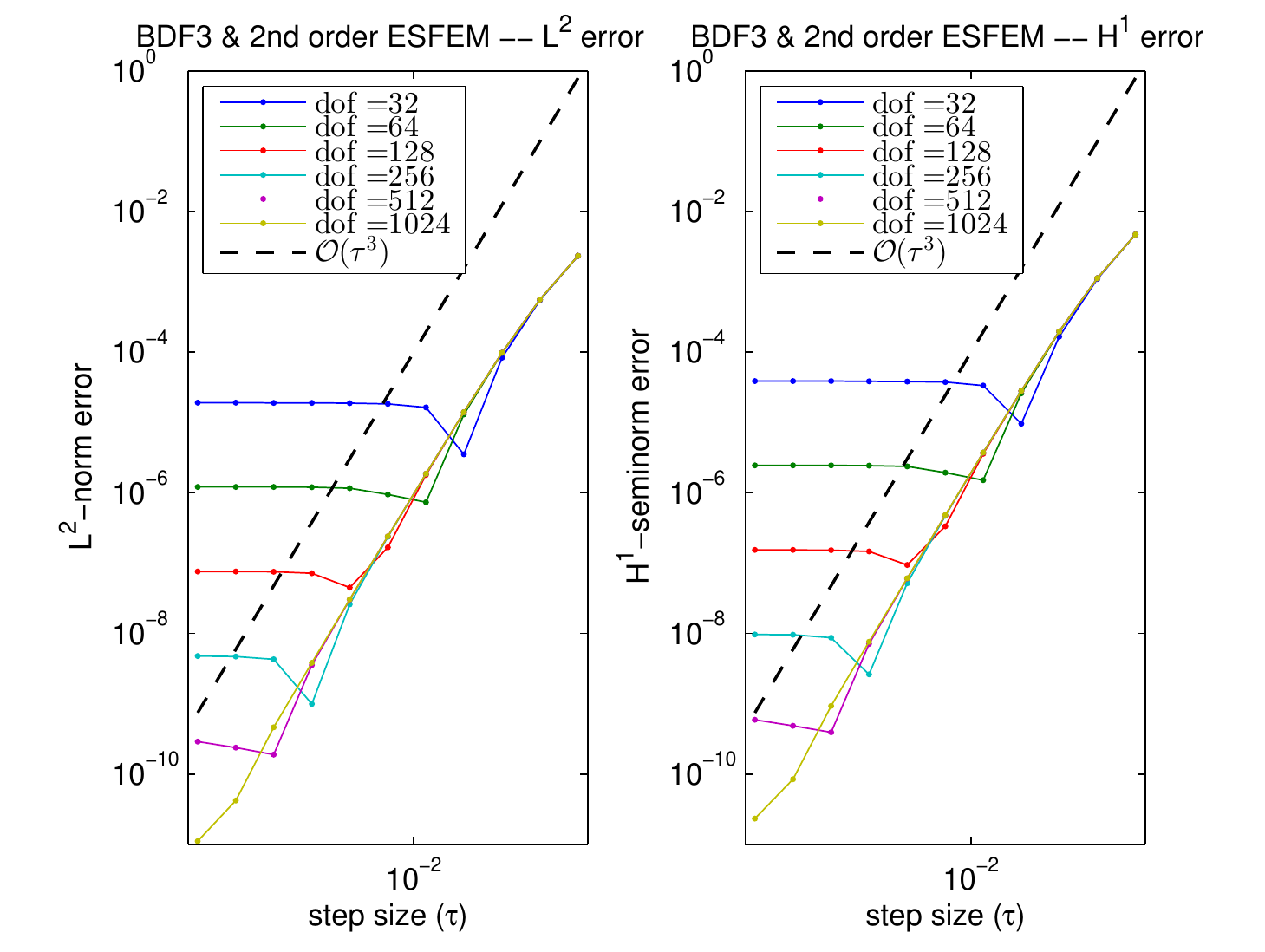}
    \caption{1D: Temporal convergence of the BDF3 / quadratic ESFEM discretisation  for the evolving surface PDE}
\end{figure}

\section*{Acknowledgement}
The author is grateful for the valuable discussions with Prof.~Christian Lubich during the preparation of the manuscript. The work of Bal\'azs Kov\'acs is funded by Deutsche Forschungsgemeinschaft, SFB 1173.

\clearpage


\end{document}